\documentclass[11pt]{article}
\usepackage{amsrefs, amsmath, amsfonts, amsthm, latexsym, hyperref, comment}
\usepackage[margin=1in]{geometry}
\usepackage{enumerate}
\usepackage{authblk}
\usepackage[dvipsnames]{xcolor}
\usepackage{float}

\usepackage{pgfplots}
\usetikzlibrary{decorations.pathreplacing}
\usetikzlibrary{intersections}

\usepackage[margin=2cm]{caption}
\usepackage{verbatim}

\newtheorem{thm}{Theorem}

\newenvironment{customthm}[1]
  {\innercustomthm}
  {\endinnercustomthm}

\newtheorem{clm}{Claim}[section]
\newtheorem{obs}[clm]{Observation}
\newtheorem{lem}[clm]{Lemma}
\newtheorem{prop}[clm]{Proposition}

\newtheorem{cor}[clm]{Corollary}
\newtheorem{conj}{Conjecture}

\newtheorem{rmk}{Remark}

\newcommand{\N}{\mathbb{N}}

\newcommand{\R}{\mathbb{R}}
\newcommand{\Pro}{\mathbb{P}}

\newcommand{\ep}{\varepsilon}
\newcommand{\al}{\alpha}

\newcommand{\lm}{\lambda}
\newcommand{\two}{(\lm^*_{n-1},\lm^*_n)}

\newcommand{\PLH}{{\mkern-2mu\times\mkern-2mu}}

\makeatletter
\newcommand{\subjclass}[2][1991]{%
  \let\@oldtitle\@title%
  \gdef\@title{\@oldtitle\footnotetext{#1 \emph{Mathematics subject classification.} #2}}%
}
\newcommand{\keywords}[1]{%
  \let\@@oldtitle\@title%
  \gdef\@title{\@@oldtitle\footnotetext{\emph{Key words and phrases.} #1.}}%
}
\makeatother

\title{Mean and Minimum of Independent Random Variables}
\subjclass[2010]{60E05, 28A35} 
\keywords{distribution-free, comparison inequalities, anti-concentration, product measure, convex minorant}


\author{Naomi Dvora Feldheim\thanks{Bar-Ilan University, Ramat-Gan, Israel. email: naomi.feldheim@biu.ac.il. Research supported in part by the Institute of Mathematics and Its Applications funded by the NSF, and by an NSF postdoctoral fellowship at Stanford University.}
\hspace{2pt}  and Ohad Noy Feldheim\thanks{Hebrew University, Jerusalem, Israel. email: ohad.feldheim@mail.huji.ac.il. Research supported in part by the Institute of Mathematics and Its Applications funded by the NSF and by a postdoctoral fellowship at Stanford University.}
}

\date{}

\begin{document}
\maketitle
\begin{abstract}
We show that any pair  $X, Y$ of independent non-compactly supported random variables on $[0,\infty)$ satisfies 
$$\liminf_{m\to\infty}\Pro(\min(X,Y) >m \,| \,X+Y> 2m) =0.$$
We conjecture multi-variate and weighted generalizations of this result, and prove them under the additional assumption that the random variables are identically distributed.
\end{abstract}

\section{Introduction}

By the simple inequality $\min(X,Y)\le \frac{X+Y}{2}\le \max(X,Y),$ it is evident that for any
pair of non-negative independent random variables $X, Y$, for all $m\ge 0$ we have
$$\Pro\Big(\min(X,Y)>m\Big)\le \Pro\left(\frac{X+Y}{2}>m\right)\le \Pro\Big(\max(X,Y)>m \Big). $$
Consider the asymptotic behavior of these inequalities when $m\to\infty.$ It is not hard to construct examples for which $\Pro\big(\frac{X+Y}{2}>m\big) \asymp \Pro\big(\max(X,Y)>m \big)$ (here $A_m \asymp B_m$ indicates that for all $m>0$ we have $c<A_m/B_m < C$ for some $0<c<C<\infty$).
For example if $X$ and $Y$ are identically distributed with
$\Pro\big(X>m\big)=1/\log (m+e)$ then
$$\Pro\Big(\max(X,Y)>m \Big) \le 2\ \Pro\Big(X>m \Big) \le 10\ \Pro\Big(X>2m \Big)\le 10\ \Pro\left(\frac{X+Y}{2}>m\right) . $$

It is therefore natural to ask whether it is ever the case that $\Pro\big(\min(X,Y)>m\big)\asymp \Pro\big(\frac{X+Y}{2}>m\big)$. Our main result, confirming a conjecture of Alon \cite{Aconj}, is that this is {\bf never} possible.

\begin{thm}\label{thm: main}
Let $X,Y$ be independent random variables on $\R_+$, which are not compactly supported. Then:
\begin{equation}\label{eq: main}
\limsup_{m\to \infty} \frac{\Pro\left(\frac{X+Y}{2}> m\right)}{\Pro(X>m)\Pro(Y>m)} =\infty.
\end{equation}
\end{thm}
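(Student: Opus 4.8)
Suppose for contradiction that the limsup in \eqref{eq: main} equals a finite number $K$; then there is $m_0$ with
\[
\Pro(X+Y>2m)\ \le\ K\,\bar F(m)\,\bar G(m)\qquad(m\ge m_0),
\]
where $\bar F(t)=\Pro(X>t)$ and $\bar G(t)=\Pro(Y>t)$ are strictly positive for all $t$ by the non-compactness hypothesis and tend to $0$. Write $f=-\log\bar F$, $g=-\log\bar G$, $h=f+g$. The plan is: (i) extract rigidity from this assumption, and (ii) contradict it with a lower bound on $\Pro(X+Y>2m)$ obtained by summing many disjoint ``slab'' events.

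\emph{Step 1 (elementary consequences).} From $\Pro(X+Y>2m)\ge\Pro(X>a)\Pro(Y>2m-a)=\bar F(a)\bar G(2m-a)$ one gets $\bar F(a)\bar G(b)\le K\,\bar F(\tfrac{a+b}2)\bar G(\tfrac{a+b}2)$ whenever $a+b\ge 2m_0$, i.e.\ $2h(\tfrac{a+b}2)\le h(a)+h(b)+2\log K$, an approximate midpoint-convexity of $h$ (so $h$ is within $O(1)$ of a convex function). From $\Pro(X+Y>2m)\ge\Pro(0<X\le m)\Pro(Y>2m)\ge c_X\,\bar G(2m)$ for $m$ large, with $c_X=\tfrac12\Pro(X>0)>0$, and symmetrically, the assumption forces $\bar F(2m),\bar G(2m)\le K'\,\bar F(m)\bar G(m)$ for $m$ large; since $\bar F(m)\bar G(m)\to0$, this gives $\bar F(2m)/\bar F(m)\to0$ and $\bar G(2m)/\bar G(m)\to0$, so both tails decay faster than every polynomial. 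In particular there are $N_m\to\infty$ dyadic quantiles of $X$ in $[m,2m]$, i.e.\ points $m=q_0<q_1<\cdots<q_{N_m}=2m$ with $\bar F(q_k)=2^{-k}\bar F(m)$ (up to the usual atoms/flats technicalities).

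\emph{Step 2 (many-slabs lower bound).} The events $\{q_{k-1}<X\le q_k,\ Y>2m-q_{k-1}\}$, $k=1,\dots,N_m$, are disjoint and contained in $\{X+Y>2m\}$; summing their probabilities and using $\bar F(q_k)=2^{-k}\bar F(m)$ and $\bar G(2m-q_k)=\bar G(m)e^{D_G(k)}$ yields
\[
\Pro(X+Y>2m)\ \ge\ \tfrac12\,\bar F(m)\bar G(m)\sum_{k=0}^{N_m-1}2^{-k}e^{D_G(k)},\qquad D_G(k):=g(m)-g(2m-q_k)\ge0,
\]
with $D_G$ nondecreasing in $k$; there is an entirely symmetric bound with $f,g$ and the dyadic quantiles of $Y$ interchanged, giving an exponent $D_F(j)=f(m)-f(2m-r_j)$. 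Hence the assumption forces the summability bounds $\sum_{k=0}^{N_m-1}2^{-k}e^{D_G(k)}\le2K$ and $\sum_{j=0}^{M_m-1}2^{-j}e^{D_F(j)}\le2K$ for all large $m$. The revealing test case is $\bar F,\bar G$ exponential with the same rate: every slab contributes equally, $D_G(k)\equiv k\log2$, so the sum is $\asymp N_m\to\infty$ — which is exactly the factor $2\lambda m$ in $\Pro(X+Y>2m)=(2\lambda m+1)e^{-2\lambda m}$, and it already contradicts the bound.

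\emph{Step 3 (the crux).} It remains to show that these summability bounds, together with $D_G(N_m)=g(m)-g(0)\to\infty$ and $D_F(M_m)=f(m)-f(0)\to\infty$ (which encode non-compact support), are impossible. The difficulty is structural: where a tail decays slowly near $m$ the corresponding decomposition has few slabs, so each must carry a lot, while where it decays quickly there are many slabs of possibly negligible size, and $\bar F,\bar G$ may misbehave at different scales. I expect the resolution to be a dichotomy, formalized via the convex minorants $\check f\le f$, $\check g\le g$ (piecewise linear, agreeing with $f$ resp.\ $g$ at their breakpoints): either $f$ or $g$ has a ``cliff'' — a short interval across which it increases by a large amount, concentrating order-$\bar F(m)$ mass there — in which case a single well-placed slab, or such a slab paired with the behaviour of the other tail, already yields $\Pro(X+Y>2m)\gg\bar F(m)\bar G(m)$ for suitable $m$ (Step 1 restricts admissible cliffs to a neighbourhood of $m$, which is what makes this delicate); or neither has a cliff, so $f\approx\check f$ and $g\approx\check g$ are close to affine on long pieces, the tails are close to exponentials with rates $\lambda,\mu$, the balance conditions of Step 1 force $\lambda\asymp\mu$, and on a long affine piece the ``equal slab contribution'' computation of the test case applies, giving $\sum_k2^{-k}e^{D_G(k)}\gtrsim(\text{length of the piece})\to\infty$. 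Carrying out this final estimate — choosing the right sequence $m_i\to\infty$ (at breakpoints of the convex minorants, where the affine pieces are long) and controlling the errors in the approximations — is the main obstacle.
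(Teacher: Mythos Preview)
Your Steps~1 and~2 are correct and, in fact, closely parallel the paper's own reduction: your approximate midpoint convexity of $h=f+g$ is exactly what the paper captures by working with the convex minorant of $(g_0+g_1)/2$, and your slab sum $\sum_k 2^{-k}e^{D_G(k)}$ is a dyadic discretization of what the paper calls $|L^0_{m,d}|_{g_0}$ (their Lemma~3.1). The fact that your Step~1 bound $\bar F(2m)\le K\bar F(m)\bar G(m)$ already kills the ``nearly concave'' regime (where $h$ drifts infinitely far above its convex minorant) is the content of the paper's Corollary~2.6, though you do not isolate it.

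The genuine gap is Step~3, as you yourself flag. Two concrete issues make your sketch fall short of a proof:

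\emph{The dichotomy is about the wrong object and not shown to be exhaustive.} Your cliff/no-cliff split concerns $f$ and $g$ individually, but the structural constraint from Step~1 is on $h=f+g$. The paper's trichotomy is on $h-\check h$ (bounded, $\to\infty$, oscillating), and each case needs a different argument. In particular, in the ``nearly convex'' case (no cliffs in $h$) the paper does \emph{not} reduce to the affine situation; instead it derives from boundedness of the slab-sum a difference inequality $\check h^{-1}(y+c)-\check h^{-1}(y)\le q\,(\check h^{-1}(y)-\check h^{-1}(y-c))$ with $q<1$, forcing $\check h^{-1}$ to be bounded (Lemma~2.3). Your ``long affine piece'' heuristic does not substitute for this: even when $\check h$ is convex and smooth, it need not have any affine pieces at all.

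\emph{The non-i.i.d.\ coupling is missing.} In your affine test, the $X$-quantile sum is bounded whenever the local slope of $g$ is smaller than that of $f$, and one must then pass to the $Y$-quantile sum; when slopes vary with $m$ one must switch back and forth. The paper handles this with a separate symmetrization (Section~3): having established the i.i.d.\ result for the averaged log-tail, it introduces $\beta=\sup\{g(m)-p^1_m(x):x\in L^g_{m,d}\}$ and splits on $\beta<\infty$ versus $\beta=\infty$, in the latter case constructing an interval on which one of the asymmetric concavity sets has large measure. Your proposal has no analogue of this step, and the sentence ``the balance conditions of Step~1 force $\lambda\asymp\mu$'' is not justified---Step~1 constrains only $f(a)+g(b)$, not $f$ and $g$ separately.

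So the outline is on a track compatible with the paper's argument, but the two decisive ideas---the difference-inequality contradiction in the convex case and the symmetrization for unequal marginals---are absent.
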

In other words, any independent, unbounded, non-negative random variables $X, Y$ satisfy:
\[
\liminf_{m\to\infty} \Pro\left( \min(X,Y)>m \: \Big|\: \frac{X+Y}{2} >m \right)=0.
\]

We remark that the $\limsup$ in the theorem is necessary: there may be an unbounded set of numbers $m$ such that the ratio $\Pro(X+Y>2m)/\Pro(\min(X,Y)>m)$ gets arbitrarily close to $1$.
However, as will become evident from the proof, when the tail distribution is either log convex or log concave the limit is guaranteed to exist. 

Theorem~\ref{thm: main} is limited to two variables and to unweighted averages. It is natural to ask if a similar statement could hold for an arbitrarily weighted average of several variables. In Section~\ref{sec: high} we conjecture such a generalization, which we later prove for the case when the variables are identically distributed. Our results could also be viewed as anti-concentration statements for product measures, a point of view which calls for additional, perhaps more bold conjectures. This is further discussed in Section~\ref{sec: disc} where we also relate our work to the ``123 comparison inequality'' of Alon and Yuster~\cite{AY} and its generalizations.

An application of our result appears in a follow-up paper \cite{FFsocial}, and briefly discussed in Section~\ref{sec: appl}.
It concerns with convergence of a model for evolving social groups introduced by Alon et al. in \cite{social}. 

Finally in Section~\ref{sec: idea} we provide an overview of our methods along with an outline for the rest of the paper.

\subsection{High dimensions and weighted averages}\label{sec: high}
The following is a natural generalization of Theorem \ref{thm: main}.
\begin{thm}\label{thm: general}
Let $X_1, \dots, X_n$ be i.i.d. random variables with a non-compactly supported distribution on $\R_+$.
For any $(\lm_1,\dots,\lm_n)\in(0,1)^n$ with $\sum_j \lm_j=1$ we have
\begin{equation}\label{eq: gen}
\frac {\Pro\left(\sum_{j=1}^n \lm_j X_j > m \right) }{\Pro\left(X_1> m \right)^n }\ge \al_n(m)\quad \text{\emph{ where }\quad$\limsup_{m\to\infty} \al_n(m)=\infty$}
\end{equation}
\end{thm}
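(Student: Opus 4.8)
\textbf{Proof proposal for Theorem~\ref{thm: general}.}

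The plan is to reduce the $n$-variable i.i.d.\ statement to the two-variable Theorem~\ref{thm: main}, exploiting the fact that all the $X_j$ share a common distribution. First I would normalize: since $\sum_j \lm_j = 1$ and each $\lm_j \in (0,1)$, there is at least one index whose weight is $\le 1/2$ and at least one whose weight is $\ge 1/n$, so after relabeling I may assume the weights are nontrivially split. Write $S = \sum_{j=1}^n \lm_j X_j$ and peel off one coordinate: $S = \lm_n X_n + (1-\lm_n) Z$, where $Z = \sum_{j=1}^{n-1} \frac{\lm_j}{1-\lm_n} X_j$ is a convex combination of the remaining i.i.d.\ copies, independent of $X_n$. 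Here the key point is that $Z$ is again a non-negative, non-compactly supported random variable (its right tail dominates $\frac{\lm_{n-1}}{1-\lm_n}$ times a single copy, hence is unbounded). Thus $S$ is a \emph{weighted} average of two independent non-negative unbounded variables, $X_n$ and $Z$, and I would want a weighted two-variable analogue of Theorem~\ref{thm: main}.

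The cleanest route is therefore to first upgrade Theorem~\ref{thm: main} to arbitrary weights in the two-variable case — i.e.\ to show that for independent non-compactly supported $U,V$ on $\R_+$ and any $\mu \in (0,1)$,
\[
\limsup_{m\to\infty} \frac{\Pro(\mu U + (1-\mu) V > m)}{\Pro(U > m)\,\Pro(V > m)} = \infty.
\]
One way to get this from the unweighted case is a rescaling trick: replace $U$ by $U' = \mu U$ and $V$ by $V' = (1-\mu) V$; then $\mu U + (1-\mu)V = U' + V'$, while $\Pro(U' > m) = \Pro(U > m/\mu)$ and similarly for $V'$. The difficulty is that Theorem~\ref{thm: main} compares $\Pro(\frac{U'+V'}{2} > m)$ with $\Pro(U'>m)\Pro(V'>m)$ at the \emph{same} scale $m$, whereas the rescaling has distorted the two tails by different factors; so one cannot simply quote it verbatim. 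Instead I expect one must re-run the argument underlying Theorem~\ref{thm: main} — whatever its structure (convex-minorant / log-tail analysis, per the remark in the introduction) — keeping track of a fixed multiplicative distortion between the scales at which the two tails are evaluated. Since constants $\mu, 1-\mu$ are fixed and the whole statement is a $\limsup$ up to arbitrary multiplicative factors, such a distortion should be absorbable; this is the main technical obstacle and the place where the i.i.d.\ hypothesis may actually be used in a stronger form (e.g.\ comparing $\Pro(Z>m)$ to $\Pro(X_1>m)^{n-1}$ uniformly).

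Given the weighted two-variable statement, the induction on $n$ is routine. Applying it to $U = X_n$, $V = Z$, $\mu = \lm_n$ gives a sequence $m_k \to \infty$ along which
\[
\Pro(S > m_k) \;\ge\; \beta_k \,\Pro(X_n > m_k)\,\Pro(Z > m_k), \qquad \beta_k \to \infty.
\]
Then I would lower-bound $\Pro(Z > m_k)$ by the inductive hypothesis for the $(n-1)$-variable weighted average (with weights $\lm_j/(1-\lm_n)$), obtaining $\Pro(Z > m_k) \ge \al_{n-1}(m_k')\,\Pro(X_1 > m_k')^{n-1}$ along a suitable subsequence; combined with $\Pro(X_n > m_k) = \Pro(X_1 > m_k)$ and the monotonicity of tails to reconcile the scales $m_k$ and $m_k'$, this yields \eqref{eq: gen} with $\al_n(m)$ unbounded. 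The only care needed is that the two $\limsup$-subsequences (from the outer peeling step and from the inductive step) be made compatible — which is where one genuinely needs the flexibility of passing to a common subsequence and the fact that the statement only asserts $\limsup \al_n(m) = \infty$ rather than a pointwise bound. I would set $\al_n(m)$ to be the infimum of the relevant ratio over all weight vectors, noting that by a compactness argument on the simplex (or by monotonicity in the smallest weight) the unboundedness survives uniformly.
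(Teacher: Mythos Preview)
Your proposal has a genuine gap at the step you yourself flag as ``the main technical obstacle.'' The peeling reduction writes $S = \lm_n X_n + (1-\lm_n) Z$ with $Z$ a convex combination of the remaining $X_j$; but $X_n$ and $Z$ are \emph{not} identically distributed, so what you need is precisely the weighted two-variable statement for independent, non-i.i.d.\ variables. That is exactly Conjecture~\ref{conj: 2 weights} in the paper, which the authors state as open. Moreover, the paper explains why re-running the Theorem~\ref{thm: main} argument does not work here: the non-i.i.d.\ proof (Section~\ref{sec: main non iid}) hinges on a symmetrization that is available only when the two weights are equal --- see the remark in Section~\ref{sec: high}. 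The rescaling trick you propose ($U' = \mu U$, $V' = (1-\mu)V$) does not restore that symmetry: after rescaling you compare $\Pro(U>m/\mu)\Pro(V>m/(1-\mu))$ to $\Pro(U+V>2m)$, and there is no distribution-free way to pass from the product of tails at two different scales back to the product at a common scale. So the induction collapses at its very first step.

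There is also a second, independent difficulty you mention but do not resolve: even granting the weighted two-variable bound, the $\limsup$-subsequence along which $\Pro(S>m_k)/[\Pro(X_n>m_k)\Pro(Z>m_k)]$ blows up need not intersect the subsequence along which the inductive hypothesis gives $\Pro(Z>m)/\Pro(X_1>m)^{n-1}$ large. Since neither ratio is monotone in $m$, ``passing to a common subsequence'' is not available.

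The paper avoids both obstacles by not inducting at all. It repeats the three-case analysis (nearly convex / nearly concave / oscillating, via the convex minorant of the log-tail) directly in $n$ variables: the set of concavity points $L^g_{m,d}$ is replaced by an $(n-1)$-dimensional set $L^g_{m,d,\bar\lm}$, Lemma~\ref{lem: reduced} becomes Lemma~\ref{lem: gen reduced}, and each of Propositions~\ref{prop: near conv}, \ref{prop: far conv}, \ref{prop: oscillating} is reproved in the multivariate weighted setting. This keeps all variables identically distributed throughout and never needs the open Conjecture~\ref{conj: 2 weights}.
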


It remains open to show that Theorem~\ref{thm: general} is true when $X_1,\dots,X_n$ are merely independent (not necessarily identically distributed). While we believe this to be true, our proofs do not extend to this case. Indeed, to prove Theorem~\ref{thm: main} in the non-i.i.d. case we employ a symmetry that exists only in the case of two equal weights.
This generalization would, however, follow from the following, which is our main conjecture.

\begin{conj}\label{conj: 2 weights}
Let $X, Y$ be independent random variables on $\R_+$ which are not compactly supported, and let $\lm \in (0,1)$. Then:
\begin{equation*}
\limsup_{m\to \infty} \frac{\Pro(\lm X+(1-\lm) Y> m)}{\Pro(X>m)\Pro(Y>m)} =\infty.
\end{equation*}
\end{conj}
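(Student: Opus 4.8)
The plan is to argue by contradiction. Fix $\lm\in(0,\tfrac12]$; the case $\lm\ge\tfrac12$ is identical with the roles of $X$ and $Y$ exchanged. Write $\bar\lm:=1-\lm\ge\tfrac12$, let $F(m):=\Pro(X\ge m)$ and $G(m):=\Pro(Y\ge m)$ (with the convention $F\equiv G\equiv 1$ on $(-\infty,0]$), and set $f:=-\log F$, $g:=-\log G$; these are non-decreasing, tend to $\infty$, and are never eventually constant. Assume there are $C\ge1$ and $m_0$ with
\begin{equation}\label{eq:contra}
\Pro(\lm X+\bar\lm Y\ge m)\le C\,F(m)G(m)\qquad(m\ge m_0),
\end{equation}
and the goal is to extract from \eqref{eq:contra} enough rigidity on the pair $(f,g)$ to contradict the fact that $f$ and $g$ are honest log-tails.

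First I would record the lower bounds forced by the product structure. Disjointifying over the value of $X$ gives the exact identity $\Pro(\lm X+\bar\lm Y\ge m)=\int G\big(\tfrac{m-\lm x}{\bar\lm}\big)\,d\mu_X(x)$, and in particular, for every $u\ge0$ with $\lm u\le m$,
\[
F(u)\,G\!\Big(\tfrac{m-\lm u}{\bar\lm}\Big)\ \le\ \Pro(\lm X+\bar\lm Y\ge m).
\]
Reparametrising by $v:=(m-\lm u)/\bar\lm$ (so that $\lm u+\bar\lm v=m$), assumption \eqref{eq:contra} becomes the single functional inequality
\begin{equation}\label{eq:star}
f(u)+g(v)\ \ge\ f(w)+g(w)-\log C,\qquad w:=\lm u+\bar\lm v\ge m_0,\ \ u,v\ge0 .
\end{equation}
The remainder of the argument aims to show that \eqref{eq:star}, together with $f,g$ non-decreasing and $f,g\to\infty$, cannot hold.

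Next I would run a convex-minorant dichotomy, scale by scale. Let $\hat f$ and $\hat g$ be the closed convex minorants of $f$ and $g$ on $[m_0,\infty)$; the complement of the contact set $\{f=\hat f\}$ is a union of open intervals on each of which $\hat f$ is an affine chord, and similarly for $g$. On a scale $m$ at which $f$ (say) has a long affine chord $I$ with a large drop of $F$ across it, I would run a \emph{big-jump estimate}: partition $I$ dyadically and sum the disjoint events $\{X\in I_k,\ \bar\lm Y\ge m-\lm\sup I_k\}$; a single jump of $X$ into $I$ already costs only $\approx F(\inf I)\gg F(m)$, while $Y$ is charged the tail $G$ at arguments strictly below $m$, and the asymmetry $\lm<\bar\lm$ pushes those arguments further below $m$ — so the total beats $C\,F(m)G(m)$ once $I$ is long enough. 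On a scale at which neither $f$ nor $g$ has such a chord — so both agree near $m$ with strictly convex functions — I would instead feed the subgradient inequality into \eqref{eq:star}: using $u-w=\bar\lm(u-v)$ and $v-w=-\lm(u-v)$ one obtains $(u-v)(\bar\lm p-\lm q)\ge-\log C$ for all admissible $u,v$, with $p\in\partial f(w)$ and $q\in\partial g(w)$; letting $u-v$ sweep an interval of length of order $w$ about $0$ forces $\bar\lm f'\sim\lm g'$ at large scales, i.e.\ $f$ and $g$ must be asymptotically affinely related with slope ratio exactly $\lm/\bar\lm$. Then the constrained minimum of $f(u)+g(v)$ on $\{\lm u+\bar\lm v=m\}$ sits at the balanced point $u=v=m$, and a second-order Laplace expansion of the convolution about this saddle yields a polynomial-in-$m$ factor over $F(m)G(m)$, again contradicting \eqref{eq:contra}. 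Since $f$ and $g$ are non-constant and unbounded, at least one of the two regimes recurs on arbitrarily large scales, which produces the desired sequence $m\to\infty$ along which $\Pro(\lm X+\bar\lm Y\ge m)/(F(m)G(m))\to\infty$, contradicting the hypothesis and proving the conjecture.

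The step I expect to be the real obstacle — and the reason the statement remains a conjecture — is making the big-jump estimate robust when $f$ and $g$ lie in \emph{different} regimes at the same scale. A big jump of $X$ into a chord of $f$ forces $Y$ up to the scale $m/\bar\lm$, which is bounded away from $m$ precisely because $\lm\neq\tfrac12$, so one needs a quantitative grip on how much $G$ can fall between $m$ and $m/\bar\lm$, i.e.\ a matching control of the \emph{other} tail at a \emph{shifted} scale. In the symmetric i.i.d.\ setting of Theorem~\ref{thm: main} and Theorem~\ref{thm: general} this difficulty is absent, since $f=g$ and the saddle is automatically balanced, so the contact case collapses to a self-referential inequality for a single convex (or concave) function. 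With $\lm\neq1-\lm$ and $f\neq g$ one seems to need a genuinely two-function version of the convex-minorant argument that simultaneously tracks the scales on which each of $f,g$ is affine versus strictly convex, and it is this coupling of the two tails across two distinct scales that the present methods do not reach.
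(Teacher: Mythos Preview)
This statement is Conjecture~\ref{conj: 2 weights} in the paper: the authors do \emph{not} prove it, and explicitly present it as open. So there is no paper proof to compare your attempt against; the relevant question is whether your proposal actually closes the gap. It does not, and in fact you say so yourself in the final paragraph, where you correctly isolate the genuine obstruction: when $\lm\neq\tfrac12$ and $X,Y$ have different laws, a big jump in one variable forces control of the \emph{other} tail at a shifted scale, and there is no mechanism in your outline (nor in the paper's methods) that couples $f$ and $g$ across those two scales. Your diagnosis here matches the paper's own remark that the symmetry used in the non-i.i.d.\ proof of Theorem~\ref{thm: main} ``exists only in the case of two equal weights''.

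There is, however, a further gap in a case you treat as settled. In the ``both strictly convex'' branch you write that feeding the subgradient inequality into \eqref{eq:star} yields $(u-v)(\bar\lm p-\lm q)\ge-\log C$. But for a convex function the subgradient inequality reads $f(u)\ge f(w)+p(u-w)$, which is a \emph{lower} bound on $f(u)+g(v)$ --- the same direction as \eqref{eq:star}. Two lower bounds on the same quantity cannot be combined to produce the claimed constraint on $\bar\lm p-\lm q$; the deduction you wrote would be valid if $f,g$ were \emph{concave}, not convex. Consequently the conclusion ``$\bar\lm f'\sim\lm g'$ at large scales'' is unsupported, and the subsequent Laplace step (which in any case would require quantitative second-order control you have not established) never gets off the ground. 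The paper's treatment of the convex case (Lemma~\ref{lem: conv}) proceeds quite differently: it extracts from the boundedness assumption a difference inequality for $f^{-1}$ that forces $f^{-1}$ to be bounded, and that argument genuinely uses $\lm=\tfrac12$ and $f=g$. If you want to salvage the convex branch in the general weighted setting you will need a replacement for that step, not a linearisation at $w$.
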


In fact, Conjecture~\ref{conj: 2 weights} would yield a much more general result concerning product measures and arbitrary norms, stated as follows.
\begin{conj}
Let $n\in \N$, and $\|\cdot\|_K$ be any norm in $\R^n$. Let $X_1, \dots, X_n$ be independent, non-compactly supported random variables on $\R_+$. Then for any vector $(a_1,\dots,a_n)\in (0,\infty)^n$ we have:
\begin{equation*}
\limsup_{m\to\infty} \frac{\Pro( \| (X_1,\dots,X_n) \|_K > m \| (a_1,\dots,a_n) \|_K ) } {\prod_{j=1}^n \Pro( X_j > m\, a_j ) } = \infty.
\end{equation*}
\end{conj}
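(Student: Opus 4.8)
The plan is to derive this statement from Conjecture~\ref{conj: 2 weights}; once that conjecture is granted, what remains is a two-stage reduction in which all the genuine difficulty has already been isolated. Throughout write $X=(X_1,\dots,X_n)$, $a=(a_1,\dots,a_n)$, take $n\ge 2$ (for $n=1$ the ratio is identically $1$), and put $r=\|a\|_K>0$; note all the probabilities $\Pro(X_j>ma_j)$ are positive since the $X_j$ are non-compactly supported. \emph{Stage~1, from the norm to a single linear functional:} $a$ lies on the boundary of the convex body $rK=\{x\in\R^n:\|x\|_K\le r\}$, so it admits a supporting functional, which after rescaling we take to be $\xi\in\R^n\setminus\{0\}$ with $\langle\xi,a\rangle=r$ and $\langle\xi,y\rangle\le r$ for all $y\in rK$; testing the latter on $ry/\|y\|_K$ gives $\|y\|_K\ge\langle\xi,y\rangle$ for every $y\in\R^n$, so $\{x:\langle\xi,x\rangle>mr\}\subseteq\{x:\|x\|_K>mr\}$ and it suffices to lower bound $\Pro(\langle\xi,X\rangle>mr)$. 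Split the indices into $S_+,S_0,S_-$ by the sign of $\xi_j$; since $\langle\xi,a\rangle=r>0$ and $a\in(0,\infty)^n$ we have $S_+\ne\emptyset$. For $j\in S_-$ fix $t_j<\infty$ with $\Pro(X_j\le t_j)\ge\tfrac12$ and set $c=-\sum_{j\in S_-}\xi_j t_j\ge 0$; on $E=\bigcap_{j\in S_-}\{X_j\le t_j\}$ one has $\langle\xi,X\rangle\ge\sum_{j\in S_+}\xi_j X_j-c$, and $E$ is independent of the block $(X_j)_{j\in S_+}$. With $A=\sum_{j\in S_+}\xi_j a_j$ (one checks $A\ge r$, equality forcing $S_-=\emptyset$ and $c=0$, hence $mA\ge mr+c$ for all large $m$), independence yields, for all large $m$,
\[
\Pro(\|X\|_K>mr)\ \ge\ 2^{-|S_-|}\,\Pro\left(\sum_{j\in S_+}\xi_j X_j>mA\right),
\]
and therefore
\[
\frac{\Pro(\|X\|_K>mr)}{\prod_{j=1}^n\Pro(X_j>ma_j)}\ \ge\ 2^{-|S_-|}\cdot\frac{\Pro\left(\sum_{j\in S_+}\xi_j X_j>mA\right)}{\prod_{j\in S_+}\Pro(X_j>ma_j)}\cdot\prod_{j\in S_0\cup S_-}\frac{1}{\Pro(X_j>ma_j)}.
\]
If $|S_+|=1$ the middle factor is $1$ and the last factor tends to $\infty$ (as $n\ge 2$ and $\Pro(X_j>ma_j)\to 0$), finishing the proof; so assume $|S_+|\ge 2$.

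\emph{Stage~2, the positively weighted quantity from Conjecture~\ref{conj: 2 weights}:} put $Y_j=X_j/a_j$ for $j\in S_+$ — independent, $\R_+$-valued and non-compactly supported — and $\nu_j=\xi_j a_j/A\in(0,1)$, so $\sum_{j\in S_+}\nu_j=1$, $\{\sum_{j\in S_+}\xi_j X_j>mA\}=\{\sum_{j\in S_+}\nu_j Y_j>m\}$ and $\{X_j>ma_j\}=\{Y_j>m\}$. Fix $k\in S_+$ and set $Z=\sum_{j\in S_+\setminus\{k\}}\tfrac{\nu_j}{1-\nu_k}\,Y_j$: this is $\R_+$-valued, non-compactly supported (it dominates a positive multiple of each $Y_j$ with $j\ne k$, and $S_+\setminus\{k\}\ne\emptyset$), independent of $Y_k$, and a convex combination of $(Y_j)_{j\ne k}$, so the inclusion $\{Y_j>m\text{ for all }j\ne k\}\subseteq\{Z>m\}$ gives $\Pro(Z>m)\ge\prod_{j\in S_+\setminus\{k\}}\Pro(Y_j>m)$. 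Since $(1-\nu_k)Z+\nu_k Y_k=\sum_{j\in S_+}\nu_j Y_j$,
\[
\frac{\Pro\left(\sum_{j\in S_+}\nu_j Y_j>m\right)}{\prod_{j\in S_+}\Pro(Y_j>m)}\ =\ \frac{\Pro\left((1-\nu_k)Z+\nu_k Y_k>m\right)}{\Pro(Z>m)\,\Pro(Y_k>m)}\cdot\frac{\Pro(Z>m)}{\prod_{j\in S_+\setminus\{k\}}\Pro(Y_j>m)},
\]
where the second factor is $\ge 1$ and, by Conjecture~\ref{conj: 2 weights} applied to the independent pair $Z,Y_k$ with $\lm=1-\nu_k\in(0,1)$, the first factor has $\limsup_{m\to\infty}=\infty$. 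Hence the product, and with it the middle factor of Stage~1 and then the full ratio, has $\limsup_{m\to\infty}=\infty$, which is the claim.

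\emph{Main obstacle:} granting Conjecture~\ref{conj: 2 weights}, the above is routine, so the real obstacle is that conjecture itself. One cannot in general bypass it by iterating two-variable inequalities, because the limit superior here --- which, by the remark after Theorem~\ref{thm: main}, genuinely cannot be promoted to a limit --- makes the favourable scales of different variable pairs hard to synchronize; this is precisely where the proof of Theorem~\ref{thm: general} exploits the i.i.d.\ hypothesis and a symmetrization. The reduction above evades the issue only because at each splitting one of the two factors carries an \emph{unconditional} lower bound from a monotone inclusion, so that Conjecture~\ref{conj: 2 weights} gets invoked exactly once.
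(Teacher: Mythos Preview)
The paper does not give a proof of this statement; it is presented as a conjecture, accompanied only by the assertion that it would follow from Conjecture~\ref{conj: 2 weights}. Your proposal supplies precisely this implication, and the argument is correct.

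A few remarks on what you add beyond the paper's bare assertion. First, you deal with a genuine subtlety that the paper's one-line claim hides: for an arbitrary norm, the supporting functional $\xi$ at $a\in(0,\infty)^n$ need not have all positive entries (e.g.\ for $\|(x,y)\|=|x|+|x-2y|$ at $a=(1,0.2)$ the relevant $\xi$ is proportional to $(1,-1)$), so one cannot directly compare $\{\langle\xi,X\rangle>mr\}$ with a positively weighted average of the $X_j$. Your Stage~1 handles this cleanly by conditioning the $S_-$-block to be bounded and discarding the $S_0$-block, paying only a constant factor and using that the corresponding denominator terms $\Pro(X_j>ma_j)$ are at most $1$ (and in fact tend to $0$, which you exploit when $|S_+|=1$). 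The verification $A\ge r$ with equality forcing $c=0$ is exactly what is needed to make the threshold shift $mr+c\le mA$ work for large $m$.

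Second, your Stage~2 is the step that turns the $|S_+|$-variable weighted average into a two-variable one: lumping all but one variable into $Z=\sum_{j\ne k}\tfrac{\nu_j}{1-\nu_k}Y_j$, you use only the trivial inclusion $\bigcap_{j\ne k}\{Y_j>m\}\subseteq\{Z>m\}$ to bound the auxiliary ratio below by $1$, so that Conjecture~\ref{conj: 2 weights} is invoked exactly once. This avoids the synchronization-of-scales issue you correctly identify in your closing paragraph. The hypotheses of Conjecture~\ref{conj: 2 weights} are met: $Z$ is $\R_+$-valued, independent of $Y_k$, and non-compactly supported since $Z\ge\tfrac{\nu_j}{1-\nu_k}Y_j$ for any fixed $j\ne k$; and $1-\nu_k\in(0,1)$ because $|S_+|\ge2$.

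In summary: your reduction is correct and more careful than the paper's unproved claim; the genuine obstacle, as you say, remains Conjecture~\ref{conj: 2 weights} itself.
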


\subsection{Distribution-free comparison inequalities and anti-concentration}\label{sec: disc}

It is instructive to view our results in light of \emph{distribution-free comparison inequalities} which were obtained for other events.
The interest in such probabilistic inequalities and their relation with combinatorics goes back at least to the 1980's (see the survey by Katona \cite{K} from that time).
Perhaps the most celebrated comparison inequality is ``the 123 theorem'' by Alon and Yuster~\cite{AY}, which states that for any i.i.d. random variables $X,Y$ we have
\[
\Pro\left( |X-Y| \le 2 \right) < 3\, \Pro( |X-Y| \le 1).
\]
The authors extended this result to compare the events $\{|X-Y|\le b\}$ and $\{|X-Y|\le a\}$ for any $a,b>0$, with a universal optimal constant.
To see the connection with our result more clearly, we rewrite Theorem~\ref{thm: main} as follows:
\begin{customthm}{1*}\label{thm: comp}
For any independent random variables $X, Y$ with non-compactly supported distribution $\mu$ on $\R_+$, there is no number $c$ such that for all $m>0$,
\begin{equation*}
\Pro\left( \frac {X+Y}2>m\right)< c\, \Pro(\min(X,Y)>m)
\end{equation*}
\end{customthm}
Thus, there is no comparison inequality between the tail-distribution function of the average $\frac 1 2 (X+Y)$ and that of the minimum $\min(X,Y)$, even for a single fixed distribution (let alone with a universal constant).

It is interesting to note that the Alon-Yuster inequality was generalized and applied in other settings.
A work by Dong, Li and Li~\cite{DLL} gives a universal comparison inequality for sums and differences of i.i.d. random variables taking values in a separable Banach space. These inequalities were further generalized by Li and Madiman \cite{LM}, who also explored the connections with extremal combinatorial problems.
It is also worth mentioning an earlier work by Schulze and Weizs\"acher~\cite{SW}, which established one of those inequalities for $\R$-valued random variables, and applied it to derive the rate of decay of the crossing level probability of an arbitrary random walk with independent increments.

As pointed out in~\cite{LM}, general concentration phenomena may stem out of distribution-free inequalities. In our case,  Theorem~\ref{thm: main} may be viewed as an ``anti-concentration'' result for product measures. Roughly speaking, it states that any product measure on $\R_+^2$ cannot be too concentrated around the diagonal $\{ (x,x): x>0\}$.
In light of this discussion, it is natural to wonder if our anti-concentration bound has counterparts in other spaces.

\subsection{An application to evolving social groups} \label{sec: appl}
In a recent study by Alon~\cite{social}, the following family of models for exclusive social groups (referred to here as \emph{clubs}) was introduced. Let $r\in(0,1)$ and let $\mu$ be an arbitrary distribution on $[0,\infty)$ representing opinions in a population (say, political inclination between left and right).
In the \emph{$r$-quantile admission process with veto power},
the club starts with a single ``extreme left'' founding member with opinion $0$.
At every step two independent candidates, whose opinions are $\mu$-distributed, apply for admission.
Each member then votes for the candidate whose opinion is closer to his (breaking ties to the left).
If at least an $r$-fraction of the current club members prefer the left-most candidate then he is admitted, and otherwise
none of the candidates are admitted.

 In~\cite{social} the authors consider this model for $\mu$ which is uniform on $[0,1]$. They show that, somewhat surprisingly,
the model exhibits a phase transition at $r=1/2$. In particular, when $r<1/2$ the distribution of opinions converges almost surely to some fixed continuous distribution. At the same time, for $r>1/2$ as the club grows, only candidates closer and closer to $1$ are accepted and the club becomes ``extreme-right''.

It is natural to ask: ``How does this behavior depend on $\mu$, the distribution of the applicants' opinions? Does it matter if this distribution is compactly supported? Could it ever be that the $r$-quantile of the empirical distribution will drift towards infinity?''

The problem is intimately related to the one discussed here, since the probability that the next admitted member's opinion
will be further to the right than the current $r$-quantile is exactly
$$\Pro\left(\min(X,Y) > q_t\, \big| \, \frac{X+Y}2\ge q_t\right),$$
where $q_t$ is the $r$-quantile after $t$ candidates were admitted,
and $X$ and $Y$ are independent $\mu$ distributed random variables.
Theorem~\ref{thm: main} thus implies that as $q_t$ grows it has a strong drift towards the left.
With some additional work one can show that $q_t$ is almost surely bounded, for any distribution of opinions $\mu$. 
These steps are used in our paper \cite{FFsocial} to show that, in fact, the $r$-quantile almost surely converges, and hence the empirical distribution of the club converges to a (possibly random) limit distribution.

\subsection{Main ideas and outline}\label{sec: idea}
The protagonist of the proof of Theorem \ref{thm: main} is the log-tail function: $g(m)=-\log \Pro(X\ge m)$, which may be any non-decreasing function on $[0,\infty)$, such that $g(0)=0$ and $g(\infty)=\infty$.
The proof is founded on the case in which $X$ and $Y$ are identically distributed and $g$ is convex.
In this case we assume towards a contradiction that the ratio in \eqref{eq: main} is bounded.
We then show (in Lemma~\ref{lem: conv}) that this
implies a difference equation on $g^{-1}$ which forces it to increas to infinity on a finite interval, in contradiction with the assumption that $X$ is not compactly supported.

Next, towards obtaining the general theorem, we consider the case of $X$ and $Y$ which are identically distributed but $g$ is not necessarily convex.
We compare between the given measure and its ``nearest'' log-concave measure. This comparison classifies all $g$-s into three types: nearly convex, nearly concave, and oscillating.
More precisely, for general $g$, we define $h$ to be the \emph{convex minorant} of $g$ (i.e., the maximal non-decreasing convex function which is pointwise
less-equal to $g$). Our goal then is to draw properties from the relation between $h$ and $g$, in order to choose the points $m$ at which we claim the ratio in
\eqref{eq: main} to be big. Specifically, we divide the proof into three cases:
\begin{itemize}
\item (``nearly convex'') $\sup_{\R_+} (g-h)<\infty$: $g$ is in bounded distance from a convex function and the proof for convex $g$ may be applied.
\item (``nearly concave'') $\lim_{x\to\infty} (g-h)(x)=\infty$: Roughly speaking, in this case $g$ has a concave, sublinear behavior, which enables us to show that even
$ \frac{\Pro(X>2m)}{\Pro(X>m)^2}$ is asymptotically unbounded.
\item (``oscillating'') $\limsup_{x\to \infty} (g-h)(x)=\infty$ and $\liminf_{x\to\infty}(g-h)(x)<\infty$: here, we use the oscillations between $g$ and its convex minorant in order to find points for which the ratio in~\eqref{eq: main} is large.
\end{itemize}

The proof of Theorem~\ref{thm: main} in the non-i.i.d. case is based on a symmetrization argument, which reduces it to an i.i.d. case.
A similar scheme is used for Theorem~\ref{thm: general}, with appropriate generalizations to high dimensions and arbitrary weights.

The rest of the paper is organized as follows.
In Section~\ref{sec: main} we prove Theorem~\ref{thm: main} for i.i.d. random variables, while in Section~\ref{sec: main non iid} we extend it to any independent random variables.
Theorem~\ref{thm: general} concerning weighted averages of several i.i.d. variables is proved in Section~\ref{sec: general}.

\subsection{Acknowledgements}
We thank Noga Alon for introducing the problem and for useful discussions.
We are grateful to Mokshay Madiman and Jiange Li for pointing out the relation with comparison inequalities, and for suggesting generalizations which led to Theorem~\ref{thm: general}.
We are also grateful to Adi Gl\"ucksam, for suggesting the investigation of the non-i.i.d. case of Theorem~\ref{thm: main} and for helpful comments. Finally we thank the anonymous referee for useful comments which improved the presentation of the paper.


\section{Proof of Theorem~\ref{thm: main}: i.i.d. case}\label{sec: main}

This section is dedicated to the proof of Theorem~\ref{thm: main} under the additional assumption that $X,Y$
are identically distributed. In Section~\ref{sec: main: prel} we provide some preliminary tools. In Section~\ref{sec: main: conv}
we handle the nearly convex case, in Section~\ref{sec: main: conc} -- the nearly concave case and in Section~\ref{sec: main: osci} -- the remaining oscillating case.
Since these cases are exhaustive, the theorem follows. The statements of this section will be used in Section~\ref{sec: main non iid} to prove the theorem in full generality.

\subsection{Preliminaries}\label{sec: main: prel}

{\bf Basic notation.}
Throughout Section \ref{sec: main}, we fix a non-compactly supported measure $\mu$ on $\R_+$, and let $X$ and $Y$ be two independent random variables with law $\mu$.
Define
\begin{equation*}
F(x):=\mu ((x,\infty))\quad\text{ and }\quad g(x):= -\log F(x).
\end{equation*}
Notice that $F:\R_+ \to (0,1]$ is right-continuous and non-increasing (with $F(0)=1$ and $F(\infty)=0$) and that $g:[0,\infty)\to [0,\infty)$ is right-continuous and non-decreasing (with $g(0)=0$ and $g(\infty)=\infty$).

\medskip
{\bf Lebesgue-Stieltjes measure.} Since $g$ is non-decreasing, it defines a Borel measure $|\cdot|_g$  (called the \emph{$g$-Lebesgue-Stieltjes measure} or just the \emph{$g$-measure}).
This measure is determined by its operation on intervals, that is: $|[\al,\beta]|_g=g(\beta)-g(\al-)$ for any $0\le \al\le \beta$.
{It is possible to estimate $|A|_g$ for any measurable set $A$ using the following lemma}.

\begin{lem}\label{lem: hazava}
{
For any $a<b$,
\[
\log F(a)-\log F(b)\le -\int_a^b \frac{dF}{F}.
\]
}
\end{lem}
\begin{proof}[Proof of Lemma \ref{lem: hazava}]
Observe that for any $a\le b$ we have,
\[
-\int_{F(a)}^{F(b)}dt =
F(a)-F(b)\le 
F(a-) - F(b) = -\int_{a}^{b} dF .
\]
Using linearity of the integral we obtain that for any positive function $\varphi(x)$ we have
$$-\int_{F(a)}^{F(b)}\varphi(t) dt \le -\int_{a}^{b}\varphi(F(s)) dF(s) .$$
Plugging in $\varphi(t)=\frac{1}{t}$ we obtain
\[
\log F(a)-\log F(b)=\int_{F(b)}^{F(a)}\frac {dt}{t} \le -\int_a^b \frac{dF}{F},
\]
as required.
\end{proof}
From Lemma \ref{lem: hazava} we deduce that for any measurable set $A$ the following holds:
\begin{align}\label{eq: g(L)}
|A|_g= \int_{A} g'=\int_{A} (-\log F)'{\le} - \int_{A}\frac{dF}{F}=\int_{A} e^g d\mu.
\end{align}

\medskip
{\bf The set of $m$-symmetric $d$-concavity points.}
For a function $f:\R_+\to\R$ and parameters $m, d \ge 0$, define $L^f_{m,d}$, \emph{the set of $m$-symmetric $d$-concavity points of $f$}, as
\begin{equation*}
L^f_{m,d}= \left\{ \ell \in [0,2m]: \:  f(\ell)+f(2m-\ell) \le 2 \big(f(m) +d\big)   \right\}.
\end{equation*}
Observe that, by definition, $L^f_{m,d}$ is symmetric around $m$. A visual depiction of the definition of $L^f_{m,d}$ is provided in Figure~\ref{fig: L def}.

\begin{figure}[H]
\centering
\begin{tikzpicture}[scale=2, 
declare function={ f(\x) = 0.1+0.22*\x*\x;}]
     \draw[->] (-0.25, 0) -- (2.5,0) node  (xaxis) [right] {};
	\coordinate (yaxis) at (0,1);
     \draw[domain=0:1.75, smooth, variable=\x, blue,thick] plot ({\x},  {f(\x)});
     
     \draw [blue, smooth, thick] 
     (1.75, {f(1.75)}) to [in=210, out=45] (2.25,{f(2.25)}) to [in=210, out=30](2.5, {f(2.5)-0.1}) node[below] {$f$}; 
     
     \draw [blue, smooth, thick] 
     (-0.25,0.05) to [in=180, out=20] (0,{f(0)});

     \draw[thick] (0.5, {f(0.5)} ) coordinate (a) -- (1.75, {f(1.75)} ) coordinate (b);
      \coordinate (c) at (1.125, {f(1.125)} );
      \coordinate (ctag) at (1.125, {f(1.125)+0.19});
      
       \fill[blue] (c) circle (0.8pt);
       \fill[blue] (ctag) circle (0.8pt);
       \fill[black] (a) circle (0.8pt);
       \fill[black] (b) circle (0.8pt);

  \draw[dashed,blue] (yaxis |- c) node[left,font=\scriptsize] {$f(m)$}
        -| (xaxis -| c) node[below, font=\scriptsize] {$m$};
  \draw[dashed,blue] (yaxis |- ctag) node[left,font=\scriptsize] {$f(m)+d$}
        -| (xaxis -| ctag) node[below] {};
        
        \draw[dashed]  (a |- a) node[left] {} -| (xaxis -| a) node[below, font=\scriptsize] {$\ell$};
	 \draw[dashed]   (b |- b) node[left] {} -| (xaxis -| b) node[below, font=\scriptsize] {$2m-\ell$};
	 
	 \draw[thick] (0, {f(0)} ) coordinate (atag) -- (2.25, {f(2.25)} ) coordinate (btag);
       \fill[black] (atag) circle (0.8pt);
       \fill[black] (btag) circle (0.8pt);
       
        \draw[dashed]  (atag |- atag) -| (xaxis -| atag) node[below, font=\scriptsize] {$\ell'$};
	 \draw[dashed]  (btag |- btag) -| (xaxis -| btag) node[below, xshift=0.5em, font=\scriptsize] {$2m-\ell'$};
\end{tikzpicture}

\caption{
The defining criterion of $L^f_{m,d}$, the set of $m$-symmetric $d$-concavity points of a function~$f$. The value $\ell$ is in  $L^f_{m,d}$, as the average of $f(\ell)$ and $f(2m-\ell)$ is less than $f(m)+d$. On the other hand $\ell'$ is not in $L^f_{m,d}$ as it does not meet this criterion.
} \label{fig: L def}
\end{figure}

\bigskip
{\bf A useful reduction.}
We can now reduce Theorem~\ref{thm: main} to the following statement on $|L^g_{m,d}|_g$.
This will be our main tool for showing Theorem~\ref{thm: main} when $g$ is either nearly convex or oscillating.

\begin{lem}\label{lem: reduced}
If there exists $d\ge 0$ such that $\limsup_{m\to\infty} |L^g_{m,d}|_g = \infty,$   then $X$ and $Y$ satisfy \eqref{eq: main}.
\end{lem}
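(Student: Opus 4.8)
The plan is to convert the hypothesis directly into a lower bound on the ratio in \eqref{eq: main}, using only the definition of $L^g_{m,d}$ and the identity \eqref{eq: g(L)}. Since $X$ and $Y$ are i.i.d.\ with law $\mu$, we have $\Pro(X>m)\Pro(Y>m)=F(m)^2$ and $\Pro(\tfrac{X+Y}{2}>m)=\Pro(X+Y>2m)$, so \eqref{eq: main} is equivalent to $\limsup_{m\to\infty}\Pro(X+Y>2m)/F(m)^2=\infty$. First I would condition on $X$: since $\{X\le 2m,\ Y>2m-X\}\subseteq\{X+Y>2m\}$ and $F(2m-x)=\Pro(Y>2m-x)$ for $x\in[0,2m]$, we get $\Pro(X+Y>2m)\ge\int_{[0,2m]}F(2m-x)\,d\mu(x)\ge\int_{[0,2m]\cap L^g_{m,d}}F(2m-x)\,d\mu(x)$, simply discarding nonnegative parts of the integral and restricting to the symmetric set of $d$-concavity points.

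Next, for $x\in L^g_{m,d}$ the defining inequality $g(x)+g(2m-x)\le 2(g(m)+d)$ reads, after exponentiating and using $F=e^{-g}$, as $F(x)F(2m-x)\ge e^{-2d}F(m)^2$, equivalently $F(2m-x)\ge e^{-2d}F(m)^2/F(x)$ (this is consistent with $F\le 1$ since $g\ge 0$ on $[0,2m]$ already forces $F(x)\ge e^{-2d}F(m)^2$ on this set). Substituting this bound and recognizing $1/F=e^{g}$, the integral becomes $e^{-2d}F(m)^2\int_{L^g_{m,d}}e^{g}\,d\mu$, which by \eqref{eq: g(L)} applied to the set $L^g_{m,d}\subseteq[0,2m]$ equals $e^{-2d}F(m)^2\,|L^g_{m,d}|_g$. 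Hence
\[
\frac{\Pro(X+Y>2m)}{F(m)^2}\ \ge\ e^{-2d}\,|L^g_{m,d}|_g \qquad\text{for every }m\ge 0,
\]
and taking $\limsup$ over $m$, the hypothesis $\limsup_{m\to\infty}|L^g_{m,d}|_g=\infty$ yields \eqref{eq: main}, as desired.

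The computation is short, and I do not expect a genuine obstacle here: the only points needing care are measure-theoretic bookkeeping — that $x\mapsto F(2m-x)$ is indeed a valid integrand for a lower bound on $\Pro(X+Y>2m)$ under the right-continuity conventions for $F$ and $g$, that $L^g_{m,d}$ is $\mu$-measurable (being a sublevel set of the monotone, hence measurable, map $x\mapsto g(x)+g(2m-x)$), and that \eqref{eq: g(L)} is legitimately applied to the bounded set $L^g_{m,d}$. All the real work is pushed into the later sections, where one must actually exhibit a $d\ge 0$ and a sequence $m\to\infty$ along which $|L^g_{m,d}|_g\to\infty$, separately in the nearly convex and oscillating regimes (the nearly concave regime being handled by a different argument).
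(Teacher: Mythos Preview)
Your proposal is correct and follows essentially the same route as the paper: condition on $X$, restrict the integral to $L^g_{m,d}$, use the defining inequality of $L^g_{m,d}$ to bound $F(2m-x)\ge e^{-2d}F(m)^2 e^{g(x)}$, and then invoke \eqref{eq: g(L)} to recognize the remaining integral as $|L^g_{m,d}|_g$. The only cosmetic difference is that the paper writes $\Pro(X+Y>2m)=\int_0^\infty F(2m-x)\,d\mu(x)$ as an equality (with $F$ tacitly extended to $1$ on negatives), whereas you more carefully use the inclusion $\{X\le 2m,\ Y>2m-X\}\subseteq\{X+Y>2m\}$ to get an inequality from the start; either way the subsequent restriction to $L^g_{m,d}\subseteq[0,2m]$ makes the distinction immaterial.
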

\begin{proof}
To see this, let $m,d\ge 0$ and observe that,
\begin{align*}
\Pro(X+Y> 2m) &=  \int_{0}^\infty \Pro(Y >  2m-x) d\mu(x)  = \int_0^\infty F(2m-x) d\mu(x)   \\
& \ge \int_{L^g_{m,d}} F(2m-x) d\mu(x)
= \int_{L^g_{m,d}}\hspace{-7pt} e^{-g(2m-x)} d\mu(x) \\
& \ge e^{-2g(m)-2d} \int_{L^g_{m,d}}\hspace{-7pt} e^{g(x)} d\mu(x)\\
& \ge e^{-2g(m)-2d} |L^g_{m,d}|_g  & \textcolor{gray}{\text{by \eqref{eq: g(L)} }} \\\\
&={ e^{-2d}\ |L^g_{m,d}|_g \ \Pro(X>m)^2  }.
\end{align*}

\end{proof}

Next we state two useful observations. The first is a relation between concavity points of two functions of bounded difference.

\begin{obs}\label{obs: domination of L}
Let $\delta>0$ and let $f_1,f_2:\R_+\to\R$ be such that $0\le f_1-f_2\le\delta$. Then for all $m,d \ge 0$ we have
$L^{f_2}_{m,d}\subseteq L^{f_1}_{m,d+\delta}$.
\end{obs}

The second regards the structure of concavity points of a convex function.

\begin{obs}\label{obs: interval}
If $f$ is convex, then for any $m,d>0$ it holds that $L^{f}_{m,d}=[m-t,m+t]$ for some $t\ge 0$.
\end{obs}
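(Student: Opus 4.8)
The plan is to reduce the statement to the elementary fact that a convex function of one real variable has interval sublevel sets. Introduce the auxiliary function $\varphi\colon[0,2m]\to\R$ defined by $\varphi(\ell)=f(\ell)+f(2m-\ell)$. Since $\ell\mapsto f(\ell)$ is convex and $\ell\mapsto f(2m-\ell)$ is convex (being the composition of the convex function $f$ with the affine map $\ell\mapsto 2m-\ell$, and $2m-\ell\in[0,2m]\subseteq\R_+$ throughout), their sum $\varphi$ is convex on $[0,2m]$. Moreover $\varphi$ is symmetric about the midpoint: $\varphi(2m-\ell)=\varphi(\ell)$.

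By definition $L^{f}_{m,d}=\{\ell\in[0,2m]:\varphi(\ell)\le 2(f(m)+d)\}$, that is, it is a sublevel set of $\varphi$ intersected with the domain $[0,2m]$, which is itself symmetric about $m$. First I would observe that the set is nonempty: since $d>0$ we have $\varphi(m)=2f(m)<2(f(m)+d)$, so $m\in L^{f}_{m,d}$. Next, a sublevel set of a convex function on $\R$ is convex, hence an interval, and the symmetry $\varphi(2m-\ell)=\varphi(\ell)$ forces that interval to be symmetric about $m$. Intersecting with $[0,2m]$ preserves this symmetry, so $L^{f}_{m,d}=[m-t,m+t]$ for some $0\le t\le m$, which is the asserted form.

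The only point requiring a bit of care is whether the interval is closed as opposed to half-open at an endpoint of $[0,2m]$, since a convex function may fail to be continuous at the boundary of its domain. This is harmless for the way the observation is used: in Lemma~\ref{lem: reduced} and in the nearly-convex and oscillating cases only the $g$-measure $|L^{g}_{m,d}|_g$ of such sets enters, and this is unaffected by the inclusion or exclusion of the two endpoints; alternatively, one may invoke lower semicontinuity of convex functions to see that the sublevel set, hence $L^{f}_{m,d}$, is genuinely closed. I do not anticipate any real obstacle — the content of the observation is exactly the one-dimensional convexity-plus-symmetry argument above.
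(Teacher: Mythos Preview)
Your proof is correct and follows essentially the same approach as the paper: both arguments show that the auxiliary function $\varphi(\ell)=f(\ell)+f(2m-\ell)$ is convex and symmetric about $m$, so its sublevel set is a symmetric interval. The only cosmetic difference is that you name $\varphi$ and invoke ``sublevel sets of convex functions are intervals'' directly, while the paper writes out the midpoint-convexity inequality for $\varphi$ explicitly and then appeals to continuity.
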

\begin{proof}
By definition $L^{f}_{m,d}$ is symmetric around $m$ and closed.
By convexity of $f$,
\[
f\left(\frac {x+y}2\right)+f\left(2m-\frac{x+y}{2}\right) \le \frac {f(x)+f(y)}{2} + \frac {f(2m-x)+f(2m-y)}{2},
\]
so that $x,y\in L^f_{m,d} \Rightarrow \frac {x+y}{2}\in L^f_{m,d}$.
Also note that, since $f$ is convex on $[0,\infty)$, it is continuous on $(0,\infty)$.
Observing that $L^f_{m,d}$ is contained in $[0,2m]$
the observation follows.
\end{proof}

\bigskip
{\bf Convex Minorant.} 
The \emph{convex minorant} of $g$, which we denote by $h$, is the maximal non-decreasing convex function such that $h(x)\le g(x)$ for all $x\ge 0$. Formally,
\[
h(x) := \sup \{ \tilde h(x) : \ \tilde{h}:\R_+\to \R_+ \text{ is convex and non-decreasing, and } \tilde{h}(t)\le g(t) \text{ for all } t\ge 0\}.
\]
As convexity and non-decreasing monotonicity are preserved by taking point-wise supremum, the function $h$ is itself convex and non-decreasing.
Notice that $h:\R_+\to\R_+$ obeys $h(0)=0$, and 
is continuous and non-decreasing (possibly equal to $0$ on an interval $[0,a]$).
Another useful property is that $h$ is an affine function
(i.e., a polynomial of degree at most $1$) on any interval where $h<g$. We end with the proof of this fact.

\begin{lem}\label{lem: h lin if not g}
Let $I\subset \R_+$ be a compact interval. If $\inf_I (g-h)>0$, then $h$ is affine on $I$.
\end{lem}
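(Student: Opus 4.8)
The plan is to argue by contradiction: if $h$ fails to be affine on the closed interval $I=[a,b]$, then $h$ has a "kink" in the interior of $I$ that we can smooth out while staying below $g$, contradicting the maximality of $h$. Concretely, suppose $h$ is not affine on $I$. Since $h$ is convex and non-decreasing, this means there is a point $c\in(a,b)$ and slopes $s_1<s_2$ such that near $c$ the function $h$ looks (at least) like $\max$ of two affine pieces of slopes $s_1$ and $s_2$ meeting at $c$; more precisely, the left derivative of $h$ at $c$ is strictly smaller than the right derivative of $h$ at $c$. Let $\eta := \inf_I (g-h) > 0$ be the given gap.

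The key step is to build a competitor $\tilde h$ that beats $h$. I would take $\tilde h$ to agree with $h$ outside a small interval $[c-\delta, c+\delta]\subset I$, and on $[c-\delta,c+\delta]$ replace the graph of $h$ by the chord connecting $(c-\delta, h(c-\delta))$ to $(c+\delta, h(c+\delta))$ — or, to keep things convex and non-decreasing globally, by the pointwise maximum of $h$ with that chord. Because $h$ has a genuine kink at $c$, this chord lies strictly above $h$ at the point $c$ itself (by an amount that is $\Theta(\delta)$, governed by the slope jump $s_2 - s_1$), yet it lies below $h + \eta$ provided $\delta$ is small enough — indeed, since $h$ is convex, the chord exceeds $h$ by at most $(s_2-s_1)\delta/2$ or so on the whole interval, which we make smaller than $\eta$. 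Hence $\tilde h \le h + \eta \le g$ on $I$, and $\tilde h = h \le g$ outside $I$, so $\tilde h$ is an admissible competitor in the supremum defining $h$; but $\tilde h(c) > h(c)$, contradicting the definition of $h$ as the pointwise supremum of all such competitors. One should also check that $\tilde h$ is non-decreasing (immediate, as a max of two non-decreasing functions) and convex (the max of a convex function and an affine function is convex), and that $\tilde h(0)=0$ is not violated — which holds automatically since we only modified $h$ on $[c-\delta,c+\delta]$ with $c-\delta > a \ge 0$, or if $a=0$ one notes the chord still passes through or below the relevant values; to be safe one can simply take $\delta$ small enough that $c - \delta > 0$, using $c \in (a,b)$ with $a\ge 0$.

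The main obstacle, and the only slightly delicate point, is making precise what "$h$ is not affine on $I$" gives us: one must extract from non-affineness of a convex function on a closed interval the existence of an \emph{interior} point $c$ where the one-sided derivatives strictly differ (equivalently, three points $x_1<x_2<x_3$ in $I$ with the slope of the first secant strictly less than the slope of the second). This is a standard fact about convex functions, but it is worth stating cleanly because the whole smoothing construction is anchored at such a $c$, and one needs $c$ in the \emph{open} interval so that a two-sided neighborhood $[c-\delta,c+\delta]$ fits inside $I$. Everything else is an elementary estimate: bounding, on $[c-\delta,c+\delta]$, the vertical distance between the secant chord and the convex graph of $h$ by a quantity that tends to $0$ with $\delta$, and hence can be made smaller than the fixed positive gap $\eta$.
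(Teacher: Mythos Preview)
Your competitor strategy---construct a convex non-decreasing $\tilde h\le g$ with $\tilde h>h$ somewhere, contradicting maximality---is exactly right and matches the paper's approach. But your extraction of the point $c$ contains a genuine error. You assert that a convex function which is not affine on $[a,b]$ must have an interior point with $h'_-(c)<h'_+(c)$, and you call this ``equivalent'' to the three-secant condition. It is not: $h(x)=x^2$ is convex, nowhere affine, and $C^\infty$, so there is no kink anywhere. The three-secant condition \emph{is} equivalent to non-affineness, but it does not yield a slope jump, and your quantitative estimates (the chord lying above $h$ at $c$ by $\Theta(\delta)$, bounded by roughly $(s_2-s_1)\delta/2$) are predicated on a positive jump $s_2-s_1$. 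As written, then, the construction does not cover smooth strictly convex $h$.

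The fix---which is essentially the paper's route---is to drop the local kink and use the secant $\ell$ over all of $I=[a,b]$. Convexity plus non-affineness give $\ell>h$ on $(a,b)$ and $\ell\le h$ outside $I$. If $K:=\max_I(\ell-h)<\eta$ then $\tilde h=\max(h,\ell)$ is already a valid competitor strictly above $h$; if $K\ge\eta$, shift down to $\ell'=\ell-(K-\eta/2)$, so that $\max_I(\ell'-h)=\eta/2\in(0,\eta)$ and still $\ell'\le\ell\le h$ outside $I$, and take $\tilde h=\max(h,\ell')$. The paper follows the same global-secant idea, shifting $\ell$ upward by $\inf_I(g-\ell)$ instead of downward; either way, a single chord over $I$---rather than a local one anchored at a kink that need not exist---does the job.
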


\begin{proof}
Denote $I=[x_0,x_1]$ and let $\ell(x)$ be the affine function satisfying $\ell(x_0)=h(x_0)$ and $\ell(x_1)=h(x_1)$.
Since $h$ is convex, we have either $h\equiv\ell$ on $I$, or $h<\ell$ on $(x_0,x_1)$ and $h>\ell$ outside of $[x_0,x_1]$.
Assume towards obtaining a contradiction that the latter holds.
Define
$\ell_0 = \ell +\inf_I (g-\ell)$. By definition $\inf_I (g-\ell_0)=0$, and in particular $g\ge \ell_0$ on $I$.
Note that outside $I$ we have $h > \ell \ge \ell_0$.
By maximality of $h$, we deduce that $ h\ge \ell_0$ everywhere (otherwise, $\max(h,\ell_0)$ would replace $h$ as the convex minorant of $g$).
This yields:
\[
\inf_I (g-h) \le \inf_I (g-\ell_0) =0,
\]
which contradicts our assumption.
\end{proof}

\subsection{Nearly convex case}\label{sec: main: conv}
This section is dedicated to the case of $g$ being within bounded distance from a convex function, i.e., the ``nearly convex case''.
Fix $h$ to be the convex minorant of $g$.
The main proposition of this section is the following.

\begin{prop}\label{prop: near conv}
If $\sup_{x\in\R_+} (g(x)-h(x)) <\infty$, then
$\exists d\ge 0: \  \limsup_{m\to\infty} |L^g_{m,d}|_g = \infty.$.
\end{prop}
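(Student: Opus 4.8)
By Observation~\ref{obs: domination of L}, if $\delta := \sup_{\R_+}(g-h) < \infty$, then $L^h_{m,0} \subseteq L^g_{m,\delta}$ for every $m$. So it suffices to show that $\limsup_{m\to\infty} |L^h_{m,0}|_g = \infty$ — that is, the $g$-measure of the set of $m$-symmetric concavity points of the \emph{convex minorant} $h$ is unbounded. First I would dispose of the trivial subcase where $h$ is eventually affine (in particular where $h\equiv 0$ past some point): then $g$ itself is bounded below and above by linear functions on a half-line, i.e. the tail $F(x)$ decays between $e^{-ax}$ and $e^{-bx}$ for large $x$; in that regime a direct computation of $\Pro(X+Y>2m)$ against $\Pro(X>m)^2$ shows the ratio in~\eqref{eq: main} blows up (a Gaussian-type vs. exponential-type comparison — the sum of two near-exponentials has a tail with an extra polynomial factor). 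So assume from now on that $h$ is not eventually affine, hence strictly convex ``infinitely often''.

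**Main argument.** By Observation~\ref{obs: interval}, $L^h_{m,0} = [m-t_m, m+t_m]$ for some $t_m \ge 0$, where $t_m$ is the largest $t$ such that $h(m-t)+h(m+t)\le 2h(m)$. The strategy is to choose a sequence $m_k \to \infty$ for which $t_{m_k}$ is large and, moreover, the interval $[m_k - t_{m_k}, m_k + t_{m_k}]$ carries a lot of $g$-measure. The natural choice: since $h$ is convex and non-decreasing but not eventually affine, pick $m_k$ to be (near) the right endpoint of a maximal affine piece of $h$, or more robustly, pick $m_k$ where the left-derivative $h'(m_k^-)$ is much smaller than $h'(x)$ for $x$ somewhat larger than $m_k$ — such points exist infinitely often because $h' \to \infty$ is impossible to achieve only through jumps if $h$ is to stay finite, wait, actually $h'$ can tend to $\infty$; the point is rather that we want places where $h$ is ``flat on the left.'' On such an interval $[m_k - t, m_k]$ where $h$ is affine with small slope $s_k$, we have $h(m_k - t) + h(m_k+t) - 2h(m_k) = [h(m_k+t) - h(m_k) - s_k t]$, which is the ``convexity defect'' of $h$ to the right of $m_k$; so $t_{m_k}$ is at least as large as the first $t$ at which $h$ rises by $s_k t$ above its tangent line from the left — this can be made large by choosing $m_k$ deep into a long shallow affine stretch. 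Meanwhile, by formula~\eqref{eq: g(L)}, $|[m_k - t_{m_k}, m_k+t_{m_k}]|_g = g(m_k + t_{m_k}) - g(m_k - t_{m_k} -)$; since $g \ge h$ and $g \le h + \delta$, this is at least $h(m_k+t_{m_k}) - h(m_k - t_{m_k}) - \delta \ge h(m_k + t_{m_k}) - h(m_k) - \delta$, and on a long affine stretch followed by genuine convexity this difference is unbounded along a suitable subsequence. Combining, $|L^g_{m_k,\delta}|_g \ge |L^h_{m_k,0}|_g \to \infty$.

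**The obstacle.** The delicate point is guaranteeing the \emph{simultaneous} largeness of $t_{m_k}$ (so that the symmetric interval reaches far enough to the right) and of the $g$-measure it collects. These pull in tension: a very long shallow affine piece gives large $t_{m_k}$ but small $g$-increment over it (the increment is only felt once $h$ starts curving up past $m_k$); conversely choosing $m_k$ where $h$ is steep gives lots of $g$-measure per unit length but small $t_{m_k}$ because the tangent line from the left rises fast and $h$ cannot exceed $2h(m)-h(m-t)$ for long. I expect the resolution to be an extremal/balancing argument: partition $[0,\infty)$ into the dyadic-type blocks where $h$ doubles, use that $h$ convex non-decreasing with $h(\infty)=\infty$ forces either a block on which $h$ is affine over a factor-of-$2$ (or longer) multiplicative window — handled by the long-shallow-stretch case — or $h$ curving up fast enough that already a bounded window to the right of $m_k$ gives a unit of $g$-measure while $t_{m_k}$ stays order $m_k$ by a convexity estimate. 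One then iterates to pump the measure to infinity. This case-split inside the ``nearly convex'' case, and extracting the right subsequence $m_k$, is the real content; everything else is bookkeeping with \eqref{eq: g(L)} and the two observations.
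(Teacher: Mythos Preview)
Your reduction via Observation~\ref{obs: domination of L} is the right first move, but taking $d=0$ on the $h$ side is fatal. For a \emph{strictly} convex $h$ (take $h(x)=x^2$, which arises when $F(x)=e^{-x^2}$ and $g=h$), the inequality $h(\ell)+h(2m-\ell)\le 2h(m)$ holds only at $\ell=m$, so $L^h_{m,0}=\{m\}$ and $t_m=0$ for every $m$. Your entire main argument rests on finding $m_k$ with $t_{m_k}$ large, which simply cannot happen here: there are no affine pieces to exploit, no ``shallow stretches,'' and the dyadic resolution you sketch in the obstacle paragraph never gets off the ground because $t_{m_k}\equiv 0$. (A secondary issue: in your eventually-affine subcase you switch from bounding $|L^g_{m,d}|_g$ to bounding the ratio in~\eqref{eq: main} directly; that proves Theorem~\ref{thm: main} in that subcase but not the proposition as stated.)

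The paper's remedy is small but decisive: take a \emph{positive} $d$ on the $h$ side, say $d=1$, so that $L^h_{m,1}\subseteq L^g_{m,1+\delta}$ and $L^h_{m,1}=[m-t_m,m+t_m]$ is a genuine interval even for strictly convex $h$. One then compares $|L^g_{m,1+\delta}|_g\ge |L^h_{m,1}|_g\ge |L^h_{m,1}|_h-\delta$, and the whole proposition reduces to showing $\limsup_m |L^h_{m,1}|_h=\infty$ for any increasing convex $h$ with $h(0)=0$, $h(\infty)=\infty$ (Lemma~\ref{lem: conv}). That lemma is where the real content lives, and it is proved not by hunting for flat pieces but by contradiction: if $|L^h_{m,1}|_h<c$ for all $m$, one derives the inequality $h^{-1}(y+c)-h^{-1}(y)<q\,(h^{-1}(y)-h^{-1}(y-c))$ with $q=(c-2)/c<1$, forcing $h^{-1}$ to be bounded and contradicting unbounded support. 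This difference-inequality argument on $h^{-1}$ is the missing idea in your proposal.
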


Through Lemma~\ref{lem: reduced}, this proposition proves Theorem~\ref{thm: main}
for the nearly convex case.

Using Observation~\ref{obs: domination of L} we reduce Proposition~\ref{prop: near conv} to the following lemma.
\begin{lem}\label{lem: conv}
Let $d>0$ and let $f:\R_+\to\R_+$ be an increasing convex function with $f(0)=0$. Then:
$$\limsup_{m\to\infty} |L^f_{m,d}|_f = \infty.$$
\end{lem}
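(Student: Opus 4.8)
The plan is to argue by contradiction: suppose that $\sup_{m} |L^f_{m,d}|_f = M < \infty$ for some finite $M$. By Observation~\ref{obs: interval}, since $f$ is convex, for each $m$ the set $L^f_{m,d}$ is a symmetric interval $[m-t(m), m+t(m)]$, where $t(m)\ge 0$ is the largest $t$ with $f(m-t)+f(m+t) \le 2(f(m)+d)$. The $f$-measure of this interval is $f(m+t(m)) - f(m-t(m)-)$, so the boundedness assumption reads
\[
f(m+t(m)) - f\big((m-t(m))-\big) \le M \qquad \text{for all } m>0.
\]
The strategy is to turn this into a growth bound on $f^{-1}$ (more precisely on a generalized inverse, since $f$ need not be strictly increasing or continuous — though on the interval where it is strictly increasing things are cleaner) and to show it forces $f$ to reach $\infty$ in finite argument, contradicting $f(0)=0$, $f$ finite-valued on all of $\R_+$.

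The key steps, in order. \emph{Step 1.} Normalize: it suffices to produce a single sequence $m_k\to\infty$ along which $|L^f_{m_k,d}|_f\to\infty$, so I may work with the value $v=f(m)$ rather than $m$ itself. Define $G = f^{-1}$ on the range of $f$; convexity of $f$ makes $G$ concave, and $f(0)=0$ gives $G(0)=0$. \emph{Step 2.} Rewrite the defining inequality for $t(m)$ in terms of $G$: the condition $f(m-t)+f(m+t)\le 2(f(m)+d)$ at the "critical" $t$ where it becomes an equality says that if we set $v=f(m)$, $a=f(m-t)$, $b=f(m+t)$ then $a+b = 2v+2d$ (so $b = 2v+2d-a$), and $t = G(b)-m = G(b) - G(v)$ on one side, $t = m - G(a) = G(v) - G(a)$ on the other, hence
\[
G(b) - G(v) = G(v) - G(a), \qquad \text{i.e.} \qquad G(v)= \tfrac12\big(G(a)+G(b)\big),
\]
with $b-v = v-a + 2d$, i.e. the two values $a,b$ straddle $v$ asymmetrically by a fixed total excess $2d$. \emph{Step 3.} The $f$-measure of the symmetric interval is $b - a$ (up to the harmless left-limit correction), so the assumption $|L^f_{m,d}|_f \le M$ becomes $b - a \le M$, uniformly. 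Combined with $b - v = v - a + 2d$ this gives $v - a \le (M-2d)/2 =: \Delta$ and $b - v \le (M+2d)/2$, so $a \ge v-\Delta$ and $b \le v + \Delta + 2d \le v+M$.

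\emph{Step 4 (the heart of the argument).} Feed this back into the concavity relation $G(v) = \tfrac12(G(a)+G(b))$ with $a,b$ within a bounded window of $v$. Pick an increasing sequence of values $v_0 < v_1 < v_2 < \cdots$ with, say, $v_{k+1}$ chosen to be the upper endpoint $b_k$ associated to $m_k = G(v_k)$, and track the \emph{increments} $G(v_{k+1})-G(v_k) = b_k$-side increment, which by Step 2 equals the $a_k$-side increment $G(v_k)-G(a_k)$. Because $a_k, b_k$ lie in a bounded window around $v_k$ and $G$ is concave, the increment $G(v_{k+1}) - G(v_k)$ is controlled \emph{from below} by the increment $G(v_k)-G(v_k-\Delta)$ or similar, while the $v$'s themselves advance by at least some fixed amount $2d$ at each step (since $b_k - a_k \ge$ the total excess forces $b_k \ge v_k + d$... more carefully: from $b-v = v-a+2d \ge 2d$ when $a\le v$, so $v_{k+1} = b_k \ge v_k + d$). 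So after $N$ steps $v_N \ge v_0 + Nd \to \infty$, while the argument $m_N = G(v_N)$ satisfies a telescoping bound
\[
G(v_N) = G(v_0) + \sum_{k=0}^{N-1}\big(G(v_{k+1})-G(v_k)\big),
\]
and the point is to show this sum \emph{converges} (stays bounded) as $N\to\infty$ — which would say $f$ attains arbitrarily large values $v_N$ at arguments $m_N$ bounded above by some finite $m^*$, forcing $f(m^*)=\infty$, the desired contradiction. The summability comes from concavity: consecutive increments of a concave function over intervals whose lengths are bounded below are themselves summable only if... — and here is the subtlety — one actually needs the reverse comparison, showing the increments $G(v_{k+1})-G(v_k)$ decay geometrically (or are otherwise summable) because each is forced by the symmetry relation to be comparable to a \emph{backward} increment of $G$, and concavity makes backward increments larger than forward ones, giving a contraction. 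I expect the precise bookkeeping of Step 4 — extracting a genuine geometric decay (or summability) of the increments from the relation $G(v)=\tfrac12(G(a)+G(b))$ together with $b-a\le M$, while handling the possible non-strict-monotonicity and jumps of $f$ via left-limits — to be the main obstacle; everything else is translation between $f$ and $G$ and bookkeeping of the fixed offset $2d$.
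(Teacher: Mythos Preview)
Your overall strategy---assume a uniform bound $|L^f_{m,d}|_f\le M$, pass to the concave inverse $G=f^{-1}$, and show $G$ is bounded, contradicting that $f$ is finite on all of $\R_+$---is exactly the paper's. Steps~1--3 are fine. (Your worry about jumps and non-strict monotonicity is unnecessary: a convex function on $\R_+$ is automatically continuous, and ``increasing'' here means strictly increasing, so $G$ is a genuine inverse.)

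The gap is in Step~4, and it is not just bookkeeping. Your iteration $v_{k+1}=b_k$ tracks the \emph{critical} endpoints, giving the exact relation $G(v_k)=\tfrac12\big(G(a_k)+G(v_{k+1})\big)$ with $a_k+v_{k+1}=2v_k+2d$. But the step sizes $\delta_k=v_{k+1}-v_k$ then vary in $[2d,(M+2d)/2]$, and the concavity comparison you need---that $I_{k+1}:=G(v_{k+1})-G(a_{k+1})$ is a fixed fraction of $I_k$---requires $a_{k+1}\ge v_k$, i.e.\ $\delta_{k+1}\le \delta_k+2d$, which is not guaranteed. When $a_{k+1}<v_k$ the concavity inequality goes the wrong way, and you get only $I_{k+1}\ge (\text{something})\cdot I_k$. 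So the scheme as written does not produce summability without a further idea.

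The paper sidesteps this by \emph{not} using the critical endpoints. Instead, with $c$ the assumed bound, for each $m$ it takes $s$ so that $f(m+s)=f(m)+c$; then $m+s\notin L^f_{m,d}$ forces $f(m-s)>f(m)-c+2d$. In your variables this reads, for every $y=f(m)$,
\[
G(y+c)-G(y)\ <\ G(y)-G(y-c+2d)\ \le\ \tfrac{c-2d}{c}\big(G(y)-G(y-c)\big),
\]
the last step by concavity of $G$. Now the iteration is on the \emph{fixed} arithmetic grid $y=kc$, and the increments $G(kc)-G((k-1)c)$ contract by the constant factor $q=(c-2d)/c<1$, giving $G(Nc)<G(c)/(1-q)$ for all $N$. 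That is the missing idea: step by the assumed bound $c$ in the value variable, not by the (variable) critical increment.
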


\begin{proof}[Reduction of Proposition~\ref{prop: near conv} to Lemma~\ref{lem: conv}]
Let $\delta=\sup_{x\in\R_+} (g(x)-h(x))$. We shall show that $\limsup_{m\to\infty} |L^g_{m,1+\delta}|_g = \infty$.
By Observation~\ref{obs: domination of L} we have $L^g_{m,1+\delta}\supseteq L^h_{m,1}$.
By Observation~\ref{obs: interval}, $L^h_{m,1}=[m-t,m+t]$ for some $t>0$. Thus,
\[
|L^g_{m,1+\delta}|_g \ge |L^h_{m,1}|_g \ge g(m+t)-g(m-t)\ge h(m+t)-h(m-t)-\delta=|L^h_{m,1}|_h -\delta,
\]
where the second inequality follows from \eqref{eq: g(L)}.
As $h$ is convex and increasing, Lemma~\ref{lem: conv} implies that $\limsup_{m\to\infty} |L^h_{m,1}|_h = \infty$, which together with
the last inequality concludes the reduction.
\end{proof}

It remains to prove Lemma \ref{lem: conv}.

\newcommand{\M}{3}
\newcommand{\A}{1.75}
\newcommand{\B}{4.25}
\newcommand{\DD}{0.2}

\begin{figure}[H]
\centering
\begin{tikzpicture}[scale=2.25,
	declare function={ f(\x) = 0.5 + 0.1*\x*\x;}]

     	\draw[->] (-1, 0) -- (5,0) node  (xaxis) [right] {};
        \coordinate (a) at (\A, {f(\A)} );
	\coordinate (b) at (\B, {f(\B)} );
	\coordinate (d) at  (\DD,{f(\DD)});
	\coordinate (bminus) at (\B, {2*f(\M) - f(\B) } );
	\coordinate (m) at (\M, {f(\M)} );
	\coordinate (m0) at (\M,0);
	\coordinate (ma) at (\M,{f(\A)} );
     	\draw[domain={\DD}:5, smooth, variable=\x, blue,thick] plot ({\x},  {f(\x)}) node[above] {$f$};
     	\draw[domain=-1:{\DD}, smooth, variable=\x, blue,thick] (-1,0.45) parabola (d);
     
      	\draw [very thin](a) -- (b);
     
        \coordinate (top) at (intersection of a--b and m--m0);
     
       \fill[blue] (m) circle (0.8pt);
       \fill[blue] (a) circle (0.8pt);
       \fill[blue] (b) circle (0.8pt);
        \fill[black] (d) circle (0.8pt);
        \fill[black] (top) circle (0.6pt);
      \draw[dotted] (m |- m) node[right] {} -| (xaxis -| m)  node[below] { ${m}\atop{f^{-1}(y)}$};
	 \node[below] at (\M,0) { ${m}\atop{f^{-1}(y)}$};

        \draw[dotted]  (a |- a) node[left] {} -| (xaxis -| a)  node[below] {$m - s\atop{} $};
	\draw[dotted]  (b |- b) node[right, font=\scriptsize] {$f(m+s)=y+c$} -| (xaxis -| b) node[below] {$m+s \atop f^{-1}(y+c)$};
	\draw[dotted]  (d |- d) node[left] {} -| (xaxis -|d) node[below] {${}\atop{f^{-1}(y-c)} $};

	 \draw [dotted ] (\M, {f(\A)}) -- (\B,  0.5+0.1*\A^2) coordinate (ba) node[right, font=\scriptsize, black] {$f(m-s)$};
	 \draw [dotted] (m) -- (\B,  0.5+0.1*\M^2) coordinate (bmid) node[right, font=\scriptsize] { $f(m)=y$};

 	 \draw [dotted ] (top) -- ( b |- top) coordinate (btop); 
         \draw[ForestGreen, decorate, decoration={brace,amplitude=3.5}]  (bmid)--(btop) node [midway, font= {\scriptsize}, left=1pt] {$d<$};
         \draw[ForestGreen] (top) --(m);

  	\draw  [red, decorate,decoration={brace,amplitude=5},xshift=-4pt,yshift=0pt] 
  	(b)--(bmid)
     	node [midway, right=3pt, red, font=\scriptsize] {$c$};

     	\draw  [red, decorate,decoration={brace,amplitude=5},xshift=-4pt,yshift=0pt] 
     	(m |- m)--(d -| m)
     	node [midway, right=3pt, red, font=\scriptsize] {$c$};

     	\draw  [ForestGreen, decorate,decoration={brace,mirror,amplitude=5, aspect=0.7},xshift=-4pt,yshift=0pt] 
     	(m) -- (ma) 
     	node [pos=0.7, left=3pt, ForestGreen,font=\scriptsize] {$c-2d>$};
	    	

	\draw [red, thin] (d)--(\M,{f(\DD)} )  node [midway, below=-0.2em, font=\scriptsize]{$f^{-1}(y)-f^{-1}(y-c) $};
          \draw[red, thin]  (m |- m)  -| (d -| m) ;

	\draw [ForestGreen] (a)--(ma) node [midway, below=-0.2em, font=\scriptsize] {$f^{-1}(y+c)-f^{-1}(y)$};
       \draw[ForestGreen]  ({\M-0.01}, {f(\M)} )  -- ({\M-0.01}, {f(\A)} );


	\node [draw]   at ({0.5*(\A+0.2)},2)                              
	{$\frac{ \textcolor{ForestGreen}{ f^{-1}(y+c)-f^{-1}(y) } }  	
    {\textcolor{red}{ \vphantom{\rule{1pt}{8pt}} f^{-1}(y)-f^{-1}(y-c)}} < \frac {\textcolor{ForestGreen}{c-2d} }{\textcolor{red}{ c}}$};
\end{tikzpicture}

\caption{
Illustration of the main argument in the proof of Lemma~\ref{lem: conv}. Observe that our choice of $c$ and $s$ guarantees 
that the average of $f(m+s)$ and $f(m-s)$
is greater than $f(m)$ by at least $d$. Since the difference between $f(m+s)$ and $f(m)$ is $c$, we deduce that the difference between $f(m)$ and $f(m-s)$ is at most $c-2d$. One may then use the convexity of $f$ to obtain \eqref{eq: reg}, written at the top left of the figure. 
} 
\label{fig: nearly convex}
\end{figure}
\begin{proof}[Proof of Lemma~\ref{lem: conv}]
This proof is accompanied by Figure \ref{fig: nearly convex}.
Assume towards obtaining a contradiction that there exists $c>0$ such that $|L^f_{m,d}|_f<c$ for all $m>0$.
Observe that as $f$ is convex it must be continuous, and since it is increasing it must have a
well defined inverse function $f^{-1}:\R_+\to\R_+$. For now, fix $m>0$ and let $s=s(m)>0$ be such that $f(m+s)=f(m)+c$.
Using Observation~\ref{obs: interval}, we may write $L^f_{m,d}=[m-t,m+t]$ for some $t=t(m)>0$.
By our assumption we thus have
$m+s\notin L^f_{m,d},$ or equivalently,
$f(m)+c+f(m-s)>2(f(m)+d).$
Hence,
\[f(m-s)>f(m)-c+2d.\] Writing $y=f(m)$ we get, using the monotonicity of $f$, that
\[
m-s > f^{-1}(y-c+2d).
\]
Recalling that $f(m+s)=y+c$, we obtain that
$2m > f^{-1}(y-c+2d) + f^{-1}(y+c)$, and hence
\begin{equation}\label{eq: dif}
 f^{-1}(y+c)- f^{-1}(y) < f^{-1}(y) - f^{-1}(y-c+2d), \quad \text{ for any } y>0.
\end{equation}
Observe that, since $f^{-1}$ is concave, we have:
\[
f^{-1}(y) - f^{-1}(y-c+2d) \, \le \, \frac{c-2d}{c} \left(f^{-1}(y)-f^{-1}(y-c)\right).
\]
Using this in~\eqref{eq: dif}, and denoting $q=\frac{c-2d}{c}  \in (0,1)$ for short, we obtain that
\begin{equation}\label{eq: reg}
 f^{-1}(y+c)- f^{-1}(y) < q \left( f^{-1}(y)-f^{-1}(y-c) \right) .
 \end{equation}
Applying this iteratively, we get that for any $k\in\N$:
\[
f^{-1}(kc)- f^{-1}((k-1)c) < q^{k-1} f^{-1}(c),
\]
so that $f^{-1}(Nc) = \sum_{k=1}^N (f^{-1}(kc)- f^{-1}((k-1)c)) < \frac{f^{-1}(c)}{1-q}$ for any $N\in\N$.
We conclude that $f^{-1}$ is bounded, and hence that $f$ is compactly supported, in contradiction with our assumption.
\end{proof}

\subsection{Nearly concave case}\label{sec: main: conc}

This section is dedicated to prove Theorem~\ref{thm: main} in the nearly-concave case, i.e., the case when
$\lim_{x\to \infty} (g(x)-h(x))=\infty$. This is done through the following proposition.

\begin{prop}\label{prop: far conv}
If $\lim_{x\to \infty} (g(x)-h(x))=\infty$, then $X$ and $Y$ satisfy \eqref{eq: main}.
\end{prop}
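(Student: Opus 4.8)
The plan is to prove the stronger statement
\[
\limsup_{m\to\infty}\big(2g(m)-g(2m)\big)=\infty,
\]
which already gives \eqref{eq: main}: since $Y\ge 0$ we have $\{X>2m\}\subseteq\{\tfrac{X+Y}{2}>m\}$, hence $\Pro\big(\tfrac{X+Y}{2}>m\big)\ge\Pro(X>2m)=e^{-g(2m)}$, so that
\[
\frac{\Pro\big(\tfrac{X+Y}{2}>m\big)}{\Pro(X>m)\,\Pro(Y>m)}\ \ge\ e^{2g(m)-g(2m)},
\]
and the right-hand side is unbounded along a sequence $m\to\infty$. (This is the sense, advertised in Section~\ref{sec: idea}, in which even $\Pro(X>2m)/\Pro(X>m)^2$ is already unbounded in this regime.)

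First I would use the hypothesis $\lim_{x\to\infty}(g-h)=\infty$ to read off the shape of $h$ at infinity. Choosing $x_1$ with $g(x)-h(x)>1$ for all $x>x_1$, we get $\inf_I(g-h)\ge 1>0$ on every closed interval $I\subset(x_1,\infty)$, so Lemma~\ref{lem: h lin if not g} shows $h$ is affine on each such $I$, and therefore $h(x)=\beta x+\gamma$ on $[x_1,\infty)$ for some $\beta\ge 0$ (recall $h$ is non-decreasing). Put $\phi:=g-h$, so that $\phi\ge 0$ (as $h\le g$), $\phi\to\infty$ (the hypothesis), and — using $2h(m)-h(2m)=\gamma$ for $m\ge x_1$ —
\[
2g(m)-g(2m)=\gamma+\big(2\phi(m)-\phi(2m)\big)\qquad(m\ge x_1).
\]
Thus it suffices to prove $\limsup_{m\to\infty}\big(2\phi(m)-\phi(2m)\big)=\infty$.

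Two facts about $\phi$ will finish it. The first is that $\liminf_{x\to\infty}\phi(x)/x=0$: otherwise $\phi(x)\ge\tfrac{c}{2}x$ for all large $x$ with some $c>0$, whence $g(x)\ge(\beta+\tfrac{c}{2})x+\gamma$ for all large $x$, and then a suitable truncated line of slope $\beta+\tfrac{c}{4}$ is a non-decreasing convex function lying below $g$ that eventually exceeds $h$ — contradicting the maximality of the convex minorant. The second is a self-contained doubling estimate. Suppose, for contradiction, that $\limsup_{m}(2\phi(m)-\phi(2m))<\infty$, say $\phi(2m)\ge 2\phi(m)-C'$ for all $m\ge m_0$ (we may take $m_0\ge x_1$ and $C'\ge 0$). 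Since $\phi\to\infty$, fix $T\ge m_0$ with $\phi(y)>C'+1$ for all $y\ge T$. Given any $x\ge 2T$, set $j:=\lfloor\log_2(x/T)\rfloor\ge 1$, so that $x/2^{j}\in[T,2T)$; iterating the inequality $j$ times yields
\[
\phi(x)\ \ge\ 2^{j}\phi\big(x/2^{j}\big)-C'(2^{j}-1)\ \ge\ 2^{j}\big(\phi(x/2^{j})-C'\big)\ >\ 2^{j}\ >\ \frac{x}{2T},
\]
so $\liminf_{x\to\infty}\phi(x)/x\ge 1/(2T)>0$, contradicting the first fact. Hence $\limsup_{m}(2\phi(m)-\phi(2m))=\infty$, and with it $\limsup_{m}(2g(m)-g(2m))=\infty$, proving the proposition.

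The main obstacle, I expect, is the reduction in the second paragraph: recognising that in the nearly-concave regime $h$ is eventually an honest line, so that $g$ splits as (that line)$\,+\,\phi$, and that the exact quantitative content of the maximality of $h$ that one needs is precisely $\liminf\phi(x)/x=0$. Once this is isolated, the reduction and the doubling argument are routine. A secondary point requiring care is that $g$, hence $\phi$, is only right-continuous and need not be monotone, so the doubling argument must be run pointwise and must use nothing but $\phi\ge 0$, $\phi\to\infty$, and $\liminf\phi(x)/x=0$.
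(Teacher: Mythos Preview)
Your proof is correct and follows essentially the same approach as the paper: reduce to showing $\limsup_m(2g(m)-g(2m))=\infty$ via the trivial bound $\Pro(X+Y>2m)\ge\Pro(X>2m)$, use Lemma~\ref{lem: h lin if not g} to see that $h$ is affine on a ray, and then run a doubling argument on $\phi=g-h$ to get a linear lower bound that contradicts the maximality of $h$. The only organizational difference is that you isolate the fact $\liminf_{x\to\infty}\phi(x)/x=0$ as a separate lemma and then contradict it, whereas the paper derives the linear lower bound on $\phi$ directly and contradicts maximality of $h$ in one stroke; your version is in fact slightly more careful about the additive constant $\gamma$ in $2h(m)-h(2m)$.
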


\begin{proof}
Denoting $f(x)=g(x)-h(x)$, our assumption is that $\lim_{x\to\infty} f(x)=\infty$.
By Lemma~\ref{lem: h lin if not g}, this implies that $h$ is affine on some infinite ray $[m_0,\infty)$,
and hence there exists $b\in\R$ such that
\[
2g(m)-g(2m) = 2f(m)-f(2m)+b, \quad \forall m>m_0.
\]
Observe that: 
\begin{equation*}
\frac{\Pro(X+Y > 2m)} {\Pro(X> m)^2} \ge \frac{\Pro(X > 2m)}{\Pro(X > m)^2} =e^{2g(m)-g(2m)}=e^{2f(m)-f(2m)+b},
\end{equation*}
for $m>m_0$.
Therefore, in order to prove~\eqref{eq: main} it is enough to show that
\begin{equation}\label{eq: h=0 goal}
\limsup_{m\to\infty} \left( 2f(m)-f(2m) \right) = \infty.
\end{equation}
Assume to the contrary that \eqref{eq: h=0 goal} does not hold. Then there exists some $c>0$ such that for all large enough $m$:
\begin{equation}\label{eq: rec}
f(2m) \ge 2f(m)-c.
\end{equation}

Since $\lim_{x\to\infty} f(x) =\infty$, there exists $a>m_0$ such that $f > 2c$ on $[a,\infty)$.
Let $x>2a$. There exists a unique $k\in \N$ such that $x_0 = \frac x{2^k} \in [a,2a)$.
By repeatedly using~\eqref{eq: rec}, we get that
\begin{align*}
f(x)=f(2^k x_0)\ge 2^k f(x_0)-(2^k-1)c \ge 2^k c \ge \frac x a  c.
\end{align*}
This implies
\[
f(x) \ge \max\left(0, \frac c{a}(x-2a)\right), \quad \forall x>0,
\]
which in turn yields $g(x) \ge h(x) +  \max(0, \frac c{a}(x-2a))$, in contradiction with the fact that $h$ is the largest convex function which is less-equal to $g$.
Thus~\eqref{eq: h=0 goal} holds, and we are done.
\end{proof}

Notice that, in the course of proving Proposition~\ref{prop: far conv}, we showed the following
\begin{cor}\label{cor: far conv}
If $\lim_{x\to \infty} (g(x)-h(x))=\infty$, then $\limsup_{m\to\infty} \left( 2g(m)-g(2m) \right) = \infty.$
\end{cor}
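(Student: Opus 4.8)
The plan is to read this corollary directly off the proof of Proposition~\ref{prop: far conv}, which already establishes the stronger intermediate fact \eqref{eq: h=0 goal}. Concretely, I would proceed as follows. As in that proof, set $f = g-h$ with $h$ the convex minorant of $g$, so that the hypothesis $\lim_{x\to\infty}(g-h)(x)=\infty$ becomes $\lim_{x\to\infty} f(x)=\infty$. Applying Lemma~\ref{lem: h lin if not g} exactly as there shows that $h$ is affine on some ray $[m_0,\infty)$. Consequently, for every $m\ge m_0$ (so that also $2m\ge m_0$) one has
\[
2g(m)-g(2m) = \bigl(2h(m)-h(2m)\bigr) + \bigl(2f(m)-f(2m)\bigr),
\]
and the first summand equals a constant $C$ independent of $m$ on $[m_0,\infty)$: if $h(x)=ax+b$ for $x\ge m_0$ then $2h(m)-h(2m)=b$ for all such $m$.

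It then remains only to recall that the contradiction argument inside the proof of Proposition~\ref{prop: far conv} is exactly a proof of \eqref{eq: h=0 goal}, namely $\limsup_{m\to\infty}\bigl(2f(m)-f(2m)\bigr)=\infty$. Substituting this into the displayed identity yields $\limsup_{m\to\infty}\bigl(2g(m)-g(2m)\bigr)=C+\infty=\infty$, which is the assertion of the corollary.

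I do not expect a genuine obstacle here, since the substantive work has already been carried out in the proof of Proposition~\ref{prop: far conv}; the only point that merits a sentence of care is that replacing $g$ by $f=g-h$ perturbs $2g(m)-g(2m)$ only by the quantity $2h(m)-h(2m)$, which is bounded (indeed constant) on $[m_0,\infty)$ because $h$ is affine there, and hence cannot affect whether the $\limsup$ is infinite. Everything else — the invocation of Lemma~\ref{lem: h lin if not g} and the citation of \eqref{eq: h=0 goal} — is verbatim from the preceding argument.
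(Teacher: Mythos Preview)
Your proposal is correct and follows essentially the same route as the paper: the corollary is read off directly from the proof of Proposition~\ref{prop: far conv}, using the affinity of $h$ on a ray and the already-established fact \eqref{eq: h=0 goal}. If anything, you are slightly more careful than the paper in tracking the constant $2h(m)-h(2m)=b$, whereas the paper writes $2g(m)-g(2m)=2f(m)-f(2m)$ without comment; your observation that this constant cannot affect the $\limsup$ being infinite is exactly the right way to close that small gap.
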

This will be of use in the non-i.i.d. case.

\subsection{Oscillating case}\label{sec: main: osci}
In this section we consider the case in which the distance between $g$ and its convex minorant $h$ oscillates.
The main statement of this section is the following.
\begin{prop}\label{prop: oscillating}
If
\[
\liminf_{x\to \infty} (g(x)-h(x))<\infty \quad\quad \text{ and  }\quad \quad\limsup_{x\to \infty} (g(x)-h(x))=\infty,
\]
Then $\exists d\ge 0: \quad  \limsup_{m\to\infty} |L^g_{m,d}|_g = \infty.$
\end{prop}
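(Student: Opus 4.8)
The goal is to produce, for some fixed $d\ge 0$, an unbounded set of scales $m$ at which $|L^g_{m,d}|_g$ is large, using the oscillation of $g-h$ between a bounded value (along a subsequence) and $+\infty$ (along another). The natural strategy is to pick $m$ to be a point where $g$ ``returns close'' to its convex minorant $h$, i.e.\ where $(g-h)(m)$ is small, say $(g-h)(m)\le C_0$ where $C_0:=\liminf_{x\to\infty}(g-h)(x)+1<\infty$. At such an $m$, the $m$-symmetric $d$-concavity set $L^g_{m,d}$ contains at least the set $L^h_{m,d'}$ for a suitable $d'$ (by Observation~\ref{obs: domination of L}, since $0\le g-h$ everywhere and $(g-h)(m)$ is controlled): indeed $\ell\in L^h_{m,d'}$ forces $g(\ell)+g(2m-\ell)\ge h(\ell)+h(2m-\ell)$... wait, that inequality goes the wrong way, so one must be more careful. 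The right reduction is: for $\ell$ to lie in $L^g_{m,d}$ we need $g(\ell)+g(2m-\ell)\le 2g(m)+2d$, and since $g(m)\ge h(m)$ is only a lower bound on the left-hand side's reference point, what we actually want is $g(m)$ \emph{large relative to} $h(m)$... but it is small. So instead the useful direction is to \emph{lower} the ambient $h$-level near $m$: we should look for $m$ where $(g-h)(m)$ is small \emph{and} where $h$ is affine on a long interval straddling $m$, so that $h(\ell)+h(2m-\ell)=2h(m)$ on that interval, whence $g(\ell)+g(2m-\ell)\le$ something we can bound only if $g$ itself is close to $h$ on a neighborhood — which need not hold.

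\textbf{Refined plan.} The correct mechanism, I expect, exploits the points where $g-h$ is \emph{large}, not small. Fix a large threshold $M$ and let $m$ be a point with $(g-h)(m)\ge M$. By Lemma~\ref{lem: h lin if not g}, $h$ is affine on the maximal open interval $(a_m,b_m)\ni m$ on which $g>h$ (more precisely, on which $g-h$ stays bounded below by a positive constant); call the slope of $h$ there $\kappa_m$. On $(a_m,b_m)$ we have $h(x)=h(m)+\kappa_m(x-m)$, so for $\ell\in(a_m,b_m)$ with $2m-\ell\in(a_m,b_m)$ we get $h(\ell)+h(2m-\ell)=2h(m)$. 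Now I want to cover $[m-t,m+t]$ by such $\ell$'s and use the \emph{reverse} estimate: on this symmetric interval, $g(\ell)\le h(\ell)+\sup_{x\in[a_m,b_m]}(g-h)(x)$. The key point is that at the two endpoints $a_m,b_m$ we have $g-h$ small (that's where $g$ touches $h$ or nearly so, guaranteed by $\liminf(g-h)<\infty$ forcing such touch-downs infinitely often, and by $h$ being the convex minorant, $g-h\to 0$ along the sequence where the support lines of $h$ touch $g$) — so the oscillation forces the interval $(a_m,b_m)$ to be \emph{long} when $(g-h)(m)$ is large, because $g-h$ must climb from near $0$ to $\ge M$ and back down, while the climb rate is constrained by... hmm, it isn't constrained pointwise. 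I think the actual argument uses $|L^g_{m,d}|_g$ directly: choose $m$ as the point maximizing $g-h$ on a touch-down interval $[a,b]$, set $d=(g-h)(m)$; then for every $\ell\in[a,b]$ with $2m-\ell\in[a,b]$, $g(\ell)+g(2m-\ell)\le 2h(m)+2(g-h)(m)=2g(m)$... no wait that needs $g(\ell)\le h(\ell)+(g-h)(m)$, which is exactly $(g-h)(\ell)\le(g-h)(m)$, true since $m$ is the max! So with $d=0$: $\ell\in L^g_{m,0}$ for all such $\ell$, giving $L^g_{m,0}\supseteq[2m-b,b]$ (symmetrizing), and $|L^g_{m,0}|_g\ge g(b)-g(2m-b)\ge h(b)-h(2m-b)=\kappa_m(2b-2m)$, or more simply $\ge g(m)-g(2m-b)$, which we then argue is large.

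\textbf{Key steps, in order.} (1) Using $\liminf_{x\to\infty}(g-h)<\infty$ and $\limsup_{x\to\infty}(g-h)=\infty$, extract a sequence of ``bumps'': intervals $[a_k,b_k]$ with $a_k\to\infty$, $(g-h)(a_k)$ and $(g-h)(b_k)$ bounded (by some constant $C$ coming from the $\liminf$), and $\max_{[a_k,b_k]}(g-h)=:M_k\to\infty$; such bumps exist because between a touch-down point where $g-h\le C+1$ and the next time $g-h$ exceeds $k$ there is an excursion, and one can trim it to start and end near the $\liminf$ level. (2) Let $m_k\in[a_k,b_k]$ attain the max $M_k$. Show $L^g_{m_k,0}\supseteq[2m_k-b_k,\,b_k]\cap[a_k,2m_k-a_k]$, i.e.\ contains the largest symmetric-about-$m_k$ subinterval of $[a_k,b_k]$; this is immediate from $(g-h)(\ell)\le M_k=(g-h)(m_k)$ for $\ell\in[a_k,b_k]$ and $h(\ell)+h(2m_k-\ell)=2h(m_k)$ (affinity of $h$ on $[a_k,b_k]$, from Lemma~\ref{lem: h lin if not g}). (3) Estimate $|L^g_{m_k,0}|_g\ge g(\beta_k)-g(m_k)+g(m_k)-g(2m_k-\beta_k)$ where $[2m_k-\beta_k,\beta_k]$ is that symmetric interval; bound below using $g\ge h$ on the right half and $g(2m_k-\beta_k)\le h(2m_k-\beta_k)+C$ on the left (if $2m_k-\beta_k$ is near an endpoint) — conclude $|L^g_{m_k,0}|_g\ge (g-h)(m_k)-C=M_k-C\to\infty$, after checking that $g(m_k)-h(m_k)$ really is captured, i.e.\ that $g(\beta_k)\ge g(m_k)$ or symmetrically $g(m_k)-g(2m_k-\beta_k)\ge M_k - C$ because $m_k$ is either in the left or right half of $[a_k,b_k]$ and we use the endpoint on the far side. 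The cleanest version: WLOG $m_k-a_k\le b_k-m_k$ (else reflect the roles), so $2m_k-a_k\le b_k$, the symmetric interval is $[a_k,2m_k-a_k]$, and $|L^g_{m_k,0}|_g\ge |[a_k,2m_k-a_k]|_g\ge |[m_k,2m_k-a_k]|_g = g(2m_k-a_k)-g(m_k)\ge h(2m_k-a_k)-g(m_k)$... that's negative-looking; better use $|[a_k,m_k]|_g = g(m_k)-g(a_k-)\ge g(m_k)-h(a_k)-C = h(m_k)+M_k - h(a_k)-C\ge M_k-C$ (since $h$ nondecreasing, $h(m_k)\ge h(a_k)$). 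So $|L^g_{m_k,0}|_g\ge g(m_k)-g(a_k-)\ge M_k-C\to\infty$. Take $d=0$.

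\textbf{Main obstacle.} The delicate point is step (1) — pinning down the bump intervals rigorously. One must ensure that there really are excursions $[a_k,b_k]$ with \emph{both} endpoints at a controlled low level of $g-h$ and interior max tending to infinity; this requires combining the $\liminf$ hypothesis (giving infinitely many times where $g-h\le C$, for $C:=\liminf(g-h)+1$) with the $\limsup$ hypothesis (giving, beyond any point, a time where $g-h>k$), and using right-continuity/monotonicity of $g$ to select $a_k=\sup\{x<(\text{a point with }g-h>k): (g-h)(x)\le C\}$ and similarly $b_k$ on the right, then verifying $a_k<m_k<b_k$, $a_k\to\infty$, and $h$ affine on $[a_k,b_k]$ via Lemma~\ref{lem: h lin if not g} (which needs $\inf_{[a_k,b_k]}(g-h)>0$ — true if $C$ is chosen below the infimum-over-the-interval, which one arranges by possibly shrinking the interval to where $g-h\ge c_0>0$; a mild bookkeeping issue but routine). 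The inequality manipulations in steps (2)–(3) are then short. I'd also note the subtlety that $g$ is only right-continuous, so all the set inclusions should be stated with closed intervals and $g(\cdot-)$ where needed, as the authors do in defining $|\cdot|_g$.
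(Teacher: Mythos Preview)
Your plan is essentially the paper's own argument: choose $m$ as the maximizer of $f=g-h$ on an excursion interval where $f$ is large in the interior and drops back near a fixed level at the ends, use Lemma~\ref{lem: h lin if not g} to make $h$ affine there, and conclude that the largest interval symmetric about $m$ inside the excursion lies in $L^g_{m,0}$, with $g$-measure at least $M_k-O(1)$.

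One step needs repair. Your ``WLOG $m_k-a_k\le b_k-m_k$, else reflect the roles'' is not a straight symmetry. In the case $m_k-a_k\le b_k-m_k$ you bound $|[a_k,m_k]|_g\ge M_k-C$ via $g(a_k-)\le h(a_k)+C$; the naive reflection would seek $|[m_k,b_k]|_g\ge M_k-C$ from $f(b_k)\le C$, but that bounds $g(b_k)$ from \emph{above}, which is useless here. The other case does go through, but by a different chain: using $2m_k-b_k\in(a_k,m_k)$ and affinity of $h$,
\[
|[2m_k-b_k,\,b_k]|_g \ \ge\ g(m_k)-g(2m_k-b_k)\ \ge\ h(m_k)-h(2m_k-b_k)\ =\ h(b_k)-h(m_k)\ \ge\ M_k-C,
\]
the last inequality because $h(b_k)\ge g(b_k)-C\ge g(m_k)-C=h(m_k)+M_k-C$. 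So the gap is fillable, but it is not a ``reflection''. The paper sidesteps the case-split entirely by defining the left endpoint via $g$ rather than via $f$ (namely $a=\sup\{x<m:\,g(x)<g(m)-\eta\}$) and then proving $2m-a\le b$ from the $2\eta$-vs-$\eta$ gap $f(m)>2\eta$, $f(b)\le\eta$; this guarantees the reflection of $[a,m]$ lands inside $[m,b]$, so only the left-half estimate is ever needed.
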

This case holds, for instance, for the function $g(x) =\lceil  \sqrt{x} \rceil ^2 $, adjusted at points of discontinuity to be right-continuous.
Through Lemma~\ref{lem: reduced}, the proposition would imply that Theorem~\ref{thm: main} holds in the oscillating case.

\renewcommand{\A}{1.75}
\renewcommand{\B}{4.25}
\renewcommand{\M}{2.5}
\newcommand{\D}{4.8}

\begin{figure}[H]
\centering
\begin{tikzpicture}[scale=2]
	

 	\draw[->] (1, 0) -- (5,0) node  (xaxis){};
        \coordinate (a-) at (\A, {0.25*\A+0.1});
      	\coordinate (a) at  ({\A+0.05}, {0.25*\A+0.01});
      	\coordinate (ar) at  ({\A+0.38}, {0.25*\A+0.8});
  	\coordinate (aep) at  ({\A+0.28}, {0.25*\A+0.4});
	\coordinate (b) at (\B,  {0.51*\B});
	\coordinate (blow) at (\B,  {0.5*\B-0.1});
	\coordinate (m-) at (\M,   {0.4*\B});
	\coordinate (m) at ({\M-0.11},   {0.4*\B-0.035});
	\coordinate (aept) at ({2*(\M-0.11) - (\A+0.28) }, 1.75);
	\coordinate (d) at (\D,   {0.67*\B});
	\coordinate (f) at (1.3,0);
	\fill[blue] (ar) circle (0.8pt);
	\fill[blue] (b) circle (0.8pt);
	\fill[blue] (m) circle (0.8pt);
	\fill[ForestGreen] (aept) circle (1pt);
	\fill[ForestGreen] (aep) circle (1pt);

	\draw [blue, thick] (1,0.3) to [in=190, out=50] (a-) to [in=190, out=20] (m-) to [in=190, out=10] (b) to [in=190, out=30]  (d)
	node[below] {$g$}; 
	\draw [red, thick, shorten >=-4.2em, shorten <=-3.3em] (a)--(blow) node [right = 3.4em, above=0.7em] {$h$};
        \draw[dashed]  (ar |- ar) node[left] {} -| (xaxis -| ar)  node[below] {$a$};
        \draw[dashed,ForestGreen]  (aep |- aep) node[left] {} -| (xaxis -| aep) coordinate (aepx) node[above=0.3em, left=0.1em, ForestGreen, font=\scriptsize] {$a-\ep$};
        \draw[dashed,ForestGreen]  (aept |- aept) node[left] {} -| (xaxis -| aept) coordinate (aeptx) node[above=0.3em, right=0.1em, ForestGreen, font=\scriptsize] {$2m-(a-\ep)$};
             \fill[ForestGreen] (aepx) circle (0.8pt);
              \fill[ForestGreen] (aeptx) circle (0.8pt);
        \draw[dashed]  (b |- b) node[left] {} -| (xaxis -| b)  node[below] {$b$} coordinate (bx);
        \draw[dashed]  (m |- m) node[left] {} -| (xaxis -| m)  node[below] {$m$} coordinate (mx);
        \draw [blue,dashed] (ar) -- (f |- ar) coordinate (af) node [left, font=\scriptsize]{$g(a)$};
        \draw [blue,dashed] (m) -- (f |- m) coordinate (mf) node [left, font=\scriptsize]{$g(m)$};
        \draw [black, dashed, ForestGreen] (aep) -- (f |- aep) coordinate (aepf) node [left, font=\tiny, ForestGreen]{$g(a-\ep)$};

	\draw[ForestGreen, ultra thick](aepx)--(aeptx);

	\draw  [black, decorate, decoration={brace, amplitude=5},xshift=-4pt,yshift=0pt] (af) -- (mf) node [midway, left=3pt, black, font=\tiny] {$\eta \ge $};
	
	\draw  [ForestGreen, decorate, decoration={brace, mirror, amplitude=5, aspect=0.25},xshift=-4pt, yshift=0pt] (aepf) -- (mf) node [
	pos=0.25,
	right=4pt, ForestGreen, font=\tiny] {$\ge \eta $};

	\draw[ForestGreen, thin] (aep) -- (aept);
	
	\coordinate (mc) at (intersection of a--blow and m--mx);
	\fill[red] (mc) circle (0.7pt);
		
	\draw  [gray, ultra thin, decorate, decoration={brace, amplitude=5, aspect=0.6},xshift=-4pt,yshift=0pt] (m) -- (mc) node [pos=0.6, right=3pt, font=\tiny, gray] {$\ge 2\eta $};
        
        \coordinate (bc) at (intersection of a--blow and b--bx);
        \fill[red] (bc) circle (0.7pt);

	\draw  [black, ultra thin, decorate, decoration={brace, mirror, amplitude=3 }, xshift=-4pt,yshift=0pt] (b) -- (bc) node [pos=0.75, left=0.5pt, font=\tiny, gray] {$ \eta\ge $};

\end{tikzpicture}

\caption{
Illustration of the main argument in the proof of Proposition~\ref{prop: oscillating}. Our choice of $b$ and $m$ guarantees that
$g(b)-h(b)<\eta$ and $g(m)-h(m)\ge 2\eta$. Our choice of $a$ guarantees that $g(m)-g(a)\le \eta$ and $g(m)-g(a-\ep)\ge \eta$ for all $\ep>0$. 
For any $x\in [a,m]$ we show that $2m-x\in [m,b)$ and $x\in L^g_{m,0}$. Then, given $d>0$, we choose $\ep>0$ sufficiently small so that $ [a-\ep,m] \subseteq L^g_{m,d}$. The fact that $|[a-\ep, m ] |_g \ge \eta$ concludes the argument.
} 
\label{fig: oscillating}
\end{figure}

\begin{proof}[Proof of Proposition~\ref{prop: oscillating}]

The proof is accompanied by Figure~\ref{fig: oscillating}.
We shall show, in fact, that $\limsup_{m\to\infty} |L^g_{m,d}|_g = \infty$ for any $d>0$ (it holds even for $d=0$, but for simplicity we do not extend the proof to this case).

Let $d,\eta>0$, we must show that there exists $m$ for which $|L^g_{m,d}|_g\ge \eta$.
As before, write $f=g-h$. Since $g$ is right-continuous and non-decreasing, it is also upper-semicontinuous,
i.e., $\limsup_{x\to x_0, \ x\ne x_0}g(x)\le g(x_0)$.
Since $h$ is continuous, we get that $f$ is upper-semicontinuous as well.
By the second premise of the proposition, there exists $m_1>0$ such that $g(m_1)-h(m_1)> 2\eta$. Define
\[
 b=\inf\{x>m_1: \, f(x)\le \eta\}, \quad\ \text{ and }\quad m = \arg \max_{[0,b]} f.
\]
Note that $b$ is well-defined due to the first premise of the proposition (provided $\eta$ is large enough), and $m$ is well-defined due to the upper-semicontinuity of $f$.
Next, define
\[a = \sup\{ 0\le x<m: \, g(x)<g(m)-\eta \}.\]
In particular, for any $\ep>0$ we have $| [a-\ep,m] |_g= g(m)-g(a-\ep)  \ge \eta$.
It remains to show that
\begin{equation}\label{eq: in L}
\exists \ep>0:  \quad [a-\ep, m]\subseteq L^{g}_{m,d},
\end{equation}
as this would imply that $|L^{g}_{m,d}|_g \ge |[a-\ep,m]|_g\ge \eta$, and complete the proof.

First, we observe that $f>\eta$ on $(a,b)$. Therefore, Lemma~\ref{lem: h lin if not g} implies that
$h$ is affine on $[a,b]$. Next, we claim that
\begin{equation}\label{eq: in interval}
\forall x \in [a,m]: \; 2m -x \in [m,b).
\end{equation}
Since $x \le m$, we have $2m-x \ge m $.
Using the affinity of $h$, we may rewrite the claimed upper bound:
$
2m-x < b \iff
m-x < b-m \iff
h(m)-h(x) < h(b)-h(m).
$
The latter holds true by the following argument:
\begin{align*}
  h(m)-h(x) &\le g(m) - g(x)  &  \textcolor{gray}{\text{by maximality of } f(m)} \\
  &    \le g(m)-g(a)         &  \textcolor{gray}{a \le x } \\
  & \le \eta                  &  \textcolor{gray}{\text{definition of } a}  \\
  &  < g(m)-h(m)-\eta                   &  \textcolor{gray}{ f(m)\ge f(m_1)>2\eta }\\
  &  \le g(b)-h(m)-\eta                      &  \textcolor{gray}{ m\le b } \\
  & \le h(b)-h(m).              &  \textcolor{gray}{\text{definition of } b }
\end{align*}

Now that~\eqref{eq: in interval} is established, we may use the affinity of $h$ in $[a,b]$ to obtain
$$h(m)-h(x)=h(2m-x)-h(m),$$ for any $x\in [a, m]$.
Using continuity of $h$, we choose $\ep>0$ small enough so that
\[
\forall x\in[a-\ep,m]:  \quad 2m-x \in [m,b), \quad \text{ and } \quad h(m)-h(x)\ge h(2m-x)-h(m)-2d.
\]
These two facts, combined with the definition of $m$, yield that for any $x \in [a-\ep,m]$,
\begin{align*}
  g(m)-g(x) &\ge h(m)-h(x) \ge h(2m-x) - h(m) -2d
\ge g(2m-x)-g(m)-2d.
\end{align*}
Rearranging this inequality yields $g(x)+g(2m-x) \le 2 g(m) +2d$ for any $x\in [a-\ep, m]$, which, in turn, yields \eqref{eq: in L}. The Proposition follows.
\end{proof}


\section{Proof of Theorem~\ref{thm: main}: non i.i.d. case}\label{sec: main non iid}
In this section we prove Theorem~\ref{thm: main} by reducing it to the i.i.d. case which was tackled in the previous section.

Let $X$, $Y$ be independent random variables on $\R_+$, and denote
\[
g_0(x):=-\log \Pro(X > x), \quad g_1(x):=-\log \Pro(Y > x), \quad g(x):= \frac 1 2 (g_0(x) + g_1(x)).
\]
Define the functions
\[
p^0_m(x) = \frac {g_0(x)+g_1(2m-x)}{2}, \quad
p^1_m(x) = \frac {g_1(x)+g_0(2m-x)}{2},
\]
and the sets
\begin{align*}
&L^{0}_{m,d} = \left\{ \ell\in [0,2m] \, : \quad p^0_m(\ell)\le g(m) +d  \right\}, \\
&L^{1}_{m,d} = \left\{ \ell\in [0,2m] \, : \quad p^1_m(\ell)\le g(m) +d  \right\}.
\end{align*}

The following lemma is a generalization of Lemma~\ref{lem: reduced} for the non-i.i.d. case.
\begin{lem}\label{lem: reduced not iid}
If\ \  $\exists j\in\{0,1\}, d\ge 0:\ \limsup_{m\to\infty} \big|L^{j}_{m,d} \big|_{g_j} = \infty,$   then $X$ and $Y$ satisfy \eqref{eq: main}.
\end{lem}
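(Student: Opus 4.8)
The plan is to mimic the computation in the proof of Lemma~\ref{lem: reduced}, but now keeping track of the two different log-tail functions $g_0$ and $g_1$. Fix $j\in\{0,1\}$ and $d\ge 0$ for which $\limsup_{m\to\infty}|L^j_{m,d}|_{g_j}=\infty$; by symmetry of the roles of $X$ and $Y$ we may as well take $j=0$. The key identity to exploit is that $\Pro(X>m)\Pro(Y>m)=e^{-g_0(m)-g_1(m)}=e^{-2g(m)}$, so the denominator in \eqref{eq: main} is $e^{-2g(m)}$, exactly as in the i.i.d. case but with the symmetrized $g$.

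First I would write, for any $m,d\ge 0$,
\[
\Pro(X+Y>2m)=\int_0^\infty F_Y(2m-x)\,d\mu_X(x)\ge \int_{L^0_{m,d}} e^{-g_1(2m-x)}\,d\mu_X(x),
\]
where $\mu_X$ is the law of $X$ and $F_Y(t)=\Pro(Y>t)=e^{-g_1(t)}$. On the set $L^0_{m,d}$ we have by definition $p^0_m(x)=\tfrac12(g_0(x)+g_1(2m-x))\le g(m)+d$, i.e. $-g_1(2m-x)\ge g_0(x)-2g(m)-2d$. Substituting this lower bound into the integrand gives
\[
\Pro(X+Y>2m)\ge e^{-2g(m)-2d}\int_{L^0_{m,d}} e^{g_0(x)}\,d\mu_X(x)
= e^{-2g(m)-2d}\,\big|L^0_{m,d}\big|_{g_0},
\]
where the last equality is the change-of-variables formula \eqref{eq: g(L)} applied with the function $g_0$ (which is exactly the log-tail of $X$, so $e^{g_0}d\mu_X$ is its $g_0$-Lebesgue--Stieltjes measure). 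Dividing by $\Pro(X>m)\Pro(Y>m)=e^{-2g(m)}$ yields
\[
\frac{\Pro(X+Y>2m)}{\Pro(X>m)\Pro(Y>m)}\ge e^{-2d}\,\big|L^0_{m,d}\big|_{g_0},
\]
and taking $\limsup$ over $m$ gives $\infty$ by hypothesis, which is \eqref{eq: main}. The case $j=1$ is identical with the roles of $g_0,g_1$ and $X,Y$ swapped, using $p^1_m$ in place of $p^0_m$.

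I do not expect a serious obstacle here; this is a bookkeeping exercise once one notices the right pairing. The only point that needs a moment's care is making sure that the measure against which we integrate $e^{g_0}$ is $\mu_X$ (matching the subscript $g_0$ in $|L^0_{m,d}|_{g_0}$) and that the exponent $g_1(2m-x)$ is the one controlled on $L^0_{m,d}$ — i.e. that the definition of $p^0_m$ pairs $g_0$ evaluated at the integration variable with $g_1$ evaluated at the reflected point, which is precisely how $L^0_{m,d}$ was set up. One should also note that $L^0_{m,d}\subseteq[0,2m]$ ensures $2m-x\ge 0$ throughout, so $g_1(2m-x)$ is well-defined. No continuity or right-continuity subtleties intervene beyond what \eqref{eq: g(L)} already handles.
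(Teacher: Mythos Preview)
Your proof is correct and is essentially identical to the paper's own argument: both condition on $X$, restrict to $L^0_{m,d}$, use the defining inequality $g_0(x)+g_1(2m-x)\le 2g(m)+2d$ to bound the integrand below by $e^{g_0(x)-2g(m)-2d}$, and then invoke \eqref{eq: g(L)} to identify the remaining integral as $|L^0_{m,d}|_{g_0}$. The only cosmetic difference is that you write the constant as $e^{-2g(m)-2d}$ while the paper writes $e^{-g_0(m)-g_1(m)-2d}$, which are equal by definition of $g$.
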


\begin{proof}
Similar to the proof of Lemma~\ref{lem: reduced}, we observe that:
\begin{align*}
\Pro(X+Y> 2m) &= \int_0^\infty e^{-g_1(2m-x)} d\mu_0(x)
 \ge e^{-g_0(m)-g_1(m)-2d} \int_{L^0_{m,d}} e^{g_0(x)} d\mu_0(x).
\end{align*}
By~\eqref{eq: g(L)}, we conclude that
\[
\Pro(X+Y> 2m) \ge e^{-2d}\Pro(X>m)\Pro(Y>m)\cdot |L^0_{m,d}|_{g_0}.
\]
The same holds after interchanging the roles of $X$ and $Y$, and so the lemma follows.
\end{proof}

\begin{proof}[Proof of Theorem~\ref{thm: main}]
Let $h$ be the convex minorant of $g=(g_0+g_1)/2$. We first deal with the case $\lim_{x\to\infty} (g(x)-h(x))=\infty$. Notice that:
\[
\frac{\Pro(X+Y > 2m)} {\Pro(X> m)\Pro(Y>m)} \ge \frac{\sqrt{ \Pro(X > 2m)  \Pro(Y>2m)} }{\Pro(X > m)\Pro(Y>m)} =e^{-\frac 1{2}(g_0(2m)+g_1(2m)) + g_0(m)+g_1(m)} = e^{2g(m)-g(2m)}.
\]
This is unbounded in our case by Corollary~\ref{cor: far conv}, and therefore~\eqref{eq: main} holds.
In the remaining cases, we know by Propositions ~\ref{prop: near conv} and~\ref{prop: oscillating} that
\[
\exists d>0:  \quad \limsup_{m\to\infty} \big| L^g_{m,d} \big|_g =\infty.
\]
Since $|A|_g = \frac 1 2\left(|A|_{g_0} + |A|_{g_1}\right)$ for any Borel set $A\subset\R$, we can assume without loss of generality that
\begin{equation}\label{eq: big g0}
\limsup_{m\to\infty} \big| L^g_{m,d} \big|_{g_0} =\infty.
\end{equation}

Define:
\[
\beta:=\sup \left\{g(m)-p^1_m(x)\ :\ m>0, x\in L^g_{m,d} \right\}
\]

First suppose that $\beta<\infty$. In this case, for every $m>0$ and every $x\in L^g_{m,d}$ we have
$g(m)- p_m^1(x)\le \beta$. In the same time, by definition of $L^g_{m,d}$, we also have
\[
p^0_m(x)+p^1_m(x)=g(x)+g(2m-x) \le 2(g(m)+d).
\]
These two things together yield:
\[
p^0_m(x) \le 2g(m) +2d-p^1_m(x) \le g(m)+2d+\beta.
\]
By definition, this means that $L^g_{m,d} \subseteq L^0_{m,2d+\beta}$. By~\eqref{eq: big g0} this yields
$\limsup_{m\to\infty}|L^0_{m,2d+\beta}|_{g_0}=\infty$, thus by Lemma~\ref{lem: reduced not iid} we are done.

We are left with the case of $\beta=\infty$. Fix a large number $\eta>0$. Since $\beta=\infty$, there exists $m>0$ and
$ x\in L^g_{m,d}$ such that
\begin{equation}\label{eq: dif p1}
p^1_m(m)-p^1_m(x)=g(m)-p^1_m(x)>\eta.
\end{equation}
Assume first that $x<m$.
Define
\[
s = \inf\{y>x: \: p^1_m(y)-p^1_m(x)>\eta \}.
\]
Notice that $s$ is well-defined and $x<s\le m$.
We shall show that:
\begin{enumerate}[(I)]
\item \label{it: big g1} $|[x,s]|_{g_1} \ge 2\eta$.
\item \label{it: in L1} $[x,s] \subseteq L^1_{m,0}$.
\end{enumerate}
These two items will imply that $|L^1_{m,0}|_{g_1} \ge 2\eta$, thus by Lemma \ref{lem: reduced not iid} our proof would be complete.
For item~\eqref{it: big g1}, notice that by upper-semi-continuity of $p^1_m$ we have
\[
\eta\le p^1_m(s)-p^1_m(x) = \frac{1}{2} (g_1(s)-g_1(x) ) +\frac 1 2 (g_0(2m-s)-g_0(2m-x)) \le \frac 1 2 (g_1(s)-g_1(x) ),
\]
which means that $|[x,s]|_{g_1}\ge 2\eta$.
For item~\eqref{it: in L1}, observe that for any $y\in [x,s]$ we have (using the definition of $s$ and~\eqref{eq: dif p1}):
\[
p^1_m(y) \le p^1_m(x)+ \eta \le p^1_m(m).
\]
Note that to get this inequality at $y=s$ we used also right-continuity of $p^1_m(\cdot)$. Thus, by definition, $y\in L^1_{m,0}$ as required.

The case $x>m$ follows similarly. We write $\tilde{x}=2m-x$ and observe that $\tilde{x}<m$. Hence \eqref{eq: dif p1} becomes $p^0_m(m)-p^0_m(\tilde x)>\eta$, so one may replace $x$ by $\tilde x$ and $p^1$ by $p^0$ in the previous argument to obtain $|L^0_{m,0}|_{g_0}>2\eta$ and end similarly by Lemma~\ref{lem: reduced not iid}.
\end{proof}

\section{Proof of Theorem~\ref{thm: general}: high dimensions and weighted averages}\label{sec: general}

\subsection{Preliminaries}
{\bf Notation.}
As before, we write $g(x) := -\log \mu((x,\infty))$, and
let $h$ be the convex minorant of $g$. 
We regard $g$ as a function from $\R$ to $\R$, setting $g(x)=0$ for $x< 0$.
We also fix $n\in\N$, and let $X_1,\dots,X_n$ be i.i.d. random variables, each distributed with law $\mu$.
We denote by $g^{\PLH n}$ the measure in $\R^n$ which is the product of $n$ copies of the one-dimensional measure defined by $g$. For a Borel set $A\subseteq \R^n$, we write $|A|_{g^{\PLH n}}$ for the measure of $A$ under this product. Finally, we write $\Lambda_n=\{(\lm_1,\dots,\lm_n)\in(0,\infty)^n : \: \sum_j \lm_j=1\}$ and fix $\bar\lm = (\lm_1,\dots,\lm_n)\in\Lambda_n$.
For $\bar x=(x_1,\dots, x_{n-1})\in\R_+^{n-1}$ and $m\in \R_+$, write
\[\phi_{\bar \lm}(\bar x,m)=\frac1{\lm_n}\left( m- \sum_{j=1}^{n-1} \lm_j x_j\right).\]

{\bf The set of $m$-symmetric $d$-concavity points.} We will use the following generalization of $L^f_{m,d}$.
For $f:\R\to \R$ and $m, d\ge 0$, we define the set
\[
L^f_{m,d, \bar \lm}= \left\{  (x_1,\dots, x_{n-1})\in\R_+^{n-1}\, \bigg| \;  \sum_{j=1}^{n-1} f( x_j) + f(x') \le n f(m) +n d   \right\},
\]
where $x'=\phi_{\bar \lm}(\bar x,m)$.


We end the preliminaries by generalizing Lemma~\ref{lem: reduced} to the high-dimensional setting.
\begin{lem}\label{lem: gen reduced}
If\ \  $\exists d\ge 0:\ \limsup_{m\to\infty} \left|L^g_{m,d,\bar\lm}\right|_{g^{\PLH (n-1)}} = \infty,$
  then $X_1,\dots,X_n$ satisfy \eqref{eq: gen}.
\end{lem}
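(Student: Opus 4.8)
The plan is to mimic the proof of Lemma~\ref{lem: reduced}, replacing the one-variable convolution identity with an $(n-1)$-fold integral over the first $n-1$ coordinates, using the product measure $\mu^{\PLH(n-1)}$ and peeling off the last factor via the affine substitution $x_n = \phi_{\bar\lm}(\bar x,m)$. Concretely, I would start from
\[
\Pro\!\left(\sum_{j=1}^n \lm_j X_j > m\right)
= \int_{\R_+^{n-1}} \Pro\!\left(\lm_n X_n > m - \sum_{j=1}^{n-1}\lm_j x_j\right) d\mu^{\PLH(n-1)}(\bar x)
= \int_{\R_+^{n-1}} F\big(\phi_{\bar\lm}(\bar x,m)\big)\, d\mu^{\PLH(n-1)}(\bar x),
\]
where $F(x) = \mu((x,\infty)) = e^{-g(x)}$ (and we interpret $F$ of a nonpositive argument as $1$, which only helps). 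Since the integrand is nonnegative, restricting the domain of integration to $L^g_{m,d,\bar\lm}$ can only decrease the integral.

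Next I would use the defining inequality of $L^g_{m,d,\bar\lm}$: on that set $x' = \phi_{\bar\lm}(\bar x,m)>0$ and $\sum_{j=1}^{n-1} g(x_j) + g(x') \le n g(m) + n d$, i.e. $g(x') \le n g(m) + nd - \sum_{j=1}^{n-1} g(x_j)$. Hence on $L^g_{m,d,\bar\lm}$,
\[
F\big(\phi_{\bar\lm}(\bar x,m)\big) = e^{-g(x')} \ge e^{-n g(m) - nd}\, \prod_{j=1}^{n-1} e^{g(x_j)},
\]
so that
\[
\Pro\!\left(\sum_{j=1}^n \lm_j X_j > m\right)
\ge e^{-n g(m) - nd} \int_{L^g_{m,d,\bar\lm}} \prod_{j=1}^{n-1} e^{g(x_j)}\, d\mu^{\PLH(n-1)}(\bar x).
\]
The remaining integral is, by the $n{-}1$-fold version of the identity \eqref{eq: g(L)} (namely $d(g\text{-measure}) = e^{g}\,d\mu$ in each coordinate, and products of these over coordinates give the product $g$-measure), exactly $\big|L^g_{m,d,\bar\lm}\big|_{g^{\PLH(n-1)}}$. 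Combining with $e^{-n g(m)} = \Pro(X_1>m)^n$ yields
\[
\frac{\Pro\!\left(\sum_{j=1}^n \lm_j X_j > m\right)}{\Pro(X_1>m)^n} \ge e^{-nd}\,\big|L^g_{m,d,\bar\lm}\big|_{g^{\PLH(n-1)}},
\]
and taking $\limsup_{m\to\infty}$ gives \eqref{eq: gen} with $\al_n(m) = e^{-nd}\big|L^g_{m,d,\bar\lm}\big|_{g^{\PLH(n-1)}}$, which has infinite $\limsup$ by hypothesis.

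The only genuinely delicate point is the Fubini/measure-theoretic bookkeeping: verifying that the linear change of variables $x_n = \phi_{\bar\lm}(\bar x,m)$ correctly reduces the $n$-dimensional tail probability to the stated $(n-1)$-dimensional integral against $F$, and that $\int_A \prod_{j} e^{g(x_j)}\, d\mu^{\PLH(n-1)} = |A|_{g^{\PLH(n-1)}}$ really is the product $g$-Lebesgue--Stieltjes measure of $A$ — this follows by applying \eqref{eq: g(L)} coordinatewise and invoking Fubini's theorem, since the product measure $g^{\PLH(n-1)}$ is by definition the product of the one-dimensional $g$-measures. A minor subtlety to note in passing is the handling of the region where $\phi_{\bar\lm}(\bar x,m)\le 0$ (there $F=1$ and the point is excluded from $L^g_{m,d,\bar\lm}$ by the requirement $x'>0$), but this causes no trouble because we only ever lower-bound. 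I would also remark that, as in Lemma~\ref{lem: reduced}, no integrability concern arises since all integrands are nonnegative.
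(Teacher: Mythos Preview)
Your proof is correct and follows essentially the same route as the paper's: express the tail probability as an $(n-1)$-fold integral via Fubini, restrict to $L^g_{m,d,\bar\lm}$, apply the defining inequality of that set to bound $e^{-g(x')}$, and recognize the remaining integral as $|L^g_{m,d,\bar\lm}|_{g^{\PLH(n-1)}}$ by the product version of \eqref{eq: g(L)}. In fact your constant $e^{-nd}$ is the one that falls out of the definition of $L^g_{m,d,\bar\lm}$ (which uses $nd$); the paper's displayed $e^{-d}$ appears to be a harmless slip, irrelevant to the conclusion.
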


\begin{proof}
Writing $L=L^g_{m,d,\bar \lm}$ for short, we have:
\begin{align*}
\Pro &\left( \sum_{j=1}^n \lm_j X_j  > m \right) = \int_{\R_+^{n-1}} \Pro\big(X_n>\phi_{\lm}(\bar x,m)\big)\ d\mu(x_1)\dots d\mu(x_{n-1}) & \textcolor{gray}{\text{definition of }  \phi_{\lm}(\bar x,m) }\\
& \ge \int_L e^{-g(\phi_{\lm}(\bar x,m))} d\mu(x_1)\dots d\mu(x_{n-1}) & \textcolor{gray}{ F = e^{-g}, \text{restrict to } L } \\
& \ge e^{-nd -ng(m)} \int_L e^{g(x_1)+\dots +g(x_{n-1}) } d\mu(x_1)\dots d\mu(x_{n-1}). &  \textcolor{gray}{\text{definition of } L }
\end{align*}

Observing that $\Pro\big(X_1>m\big)^n= e^{-n g(m)}$, we conclude that
\[
\frac{\Pro(\sum_j \lm_j X_j > m) }{ \Pro(X_1>m)^n } \ge e^{-nd}  \int_L e^{g(x_1)+\dots +g(x_{n-1})} d\mu(x_1)\dots d\mu(x_{n-1}) = e^{-nd} \left| L \right|_{g^{\PLH (n-1)}}.
\]

The last equality follows from the definition of product measure, and the fact that in one-dimension $\int_A e^g d\mu \le  |A|_g$ (by~\eqref{eq: g(L)}).
The lemma follows.
\end{proof}

\subsection{Nearly convex case}
As in the proof of Theorem~\ref{thm: main}, we first treat the case where $g$ is closely approximated by its
convex minorant. Our goal is to prove the following generalization of Proposition~\ref{prop: near conv}, which together with Lemma
 \ref{lem: gen reduced} implies Theorem \ref{thm: general} in the nearly convex case.
\begin{prop}\label{prop: near conv gen}
If $\sup_{\R_+} (g-h)<\infty$ and 
$\lm_1\ge \dots \ge \lm_{n-1}\ge \lm_n$, then there exists $d\ge 0$ such that
$\limsup_{m\to\infty} \left|L^g_{m,d,\bar\lm}\right|_{g^{\PLH (n-1)}} = \infty$.
\end{prop}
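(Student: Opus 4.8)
\textbf{Proof proposal for Proposition~\ref{prop: near conv gen}.}

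The plan is to follow the same reduction strategy that worked in the two-variable nearly convex case, namely to pass from $g$ to its convex minorant $h$ at the cost of an additive constant in the concavity parameter, and then to prove the statement for a genuinely convex increasing function. First I would invoke Observation~\ref{obs: domination of L} (in its obvious $n$-dimensional analogue): if $\delta=\sup_{\R_+}(g-h)<\infty$, then for each $(x_1,\dots,x_{n-1})$ with $x'=\phi_{\bar\lm}(x,m)$ one has $\sum_j h(x_j)+h(x')\le \sum_j g(x_j)+g(x')$, and conversely $\sum_j g(x_j)+g(x')\le \sum_j h(x_j)+h(x')+n\delta$; consequently $L^h_{m,d,\bar\lm}\subseteq L^g_{m,d+\delta,\bar\lm}$. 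Moreover, exactly as in the reduction of Proposition~\ref{prop: near conv}, on the set $L^h_{m,d,\bar\lm}$ the $g$-product-measure and the $h$-product-measure differ only by bounded multiplicative/additive factors coming from $\delta$, so it suffices to show $\limsup_{m\to\infty}\left|L^h_{m,d,\bar\lm}\right|_{h^{\PLH(n-1)}}=\infty$ for the convex increasing $h$ (with $h(0)=0$; the degenerate case $h\equiv 0$ must be excluded, but then $g$ is bounded, contradicting non-compact support). Thus the proposition reduces to a clean statement: \emph{for any increasing convex $f:\R_+\to\R_+$ with $f(0)=0$, any $\bar\lm\in\Lambda_n$ and any $d>0$, $\limsup_{m\to\infty}\left|L^f_{m,d,\bar\lm}\right|_{f^{\PLH(n-1)}}=\infty$} — the direct generalization of Lemma~\ref{lem: conv}.

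To prove this convex statement I would argue by contradiction, assuming $\left|L^f_{m,d,\bar\lm}\right|_{f^{\PLH(n-1)}}<c$ for all $m$. The geometry is: on the simplex-like slice $\{\sum_j\lm_j x_j=m\}$ we are asking how much $f^{\PLH(n-1)}$-mass sits in the sublevel set of $\Phi(x)=\sum_{j<n}f(x_j)+f(\phi_{\bar\lm}(x,m))$ near its minimum. By convexity of $f$ the minimum of $\Phi$ over the slice is attained at the symmetric point $x_1=\dots=x_{n-1}=m$ (where $x'=m$ too), with value $nf(m)$, and the sublevel set $L^f_{m,d,\bar\lm}$ is a convex symmetric region around that point — this is the analogue of Observation~\ref{obs: interval}, proved by the same midpoint-convexity computation. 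I would then extract a one-dimensional consequence by restricting to a coordinate line: fix $x_2=\dots=x_{n-1}=m$ and vary $x_1$, so that $x'=\phi_{\bar\lm}=m-\tfrac{\lm_1}{\lm_n}(x_1-m)$, giving a condition of the form $f(x_1)+f(m-\tfrac{\lm_1}{\lm_n}(x_1-m))\le 2f(m)+nd'$ for a suitable $d'$; this is precisely the shape handled in Lemma~\ref{lem: conv} (possibly after the change of variables in Conjecture~\ref{conj: 2 weights}'s regime, which is why one might prefer a slightly different slicing). From the boundedness assumption one again gets, via $s=s(m)$ with $f(m+s)=f(m)+\text{(const)}$, a difference inequality $f^{-1}(y+c)-f^{-1}(y)<q\,(f^{-1}(y)-f^{-1}(y-c))$ with $q\in(0,1)$, forcing $f^{-1}$ bounded and hence $f$ compactly supported — the desired contradiction. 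The bookkeeping is that the measure bound on the full $(n-1)$-dimensional set must be converted into a bound on a one-dimensional marginal length, which costs only constants because the other $n-2$ coordinates can be pinned at $m$.

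The main obstacle I anticipate is the passage from ``small $(n-1)$-dimensional $f^{\PLH(n-1)}$-mass of $L^f_{m,d,\bar\lm}$'' to ``small one-dimensional $f$-length in a single coordinate direction.'' Unlike the $n=2$ case, where $L$ is literally an interval and its $g$-measure is just $g(m+t)-g(m-t)$, here $L$ is a genuinely $(n-1)$-dimensional convex body and one needs that if it contains a long segment in the $x_1$-direction through the center then its product measure is correspondingly large; conversely, bounding the product measure should bound the length of every central axis-segment. This requires care because the weights $\lm_j$ are unequal, so the body is not symmetric under permuting coordinates and the slice $\phi_{\bar\lm}$ mixes directions; one wants to choose the slicing direction (or a clever sub-slice keeping $n-2$ coordinates fixed at $m$) so that the induced one-dimensional problem is exactly of the Lemma~\ref{lem: conv} type and the measure comparison is an easy Fubini estimate. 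Handling the unequal-weight change of variables cleanly — making sure the constant $q=\tfrac{c-2d}{c}$-type ratio still lands in $(0,1)$ after rescaling by $\lm_1/\lm_n$ — is the delicate point, but it should go through since $f^{-1}$ concave is robust under affine reparametrization of its argument.
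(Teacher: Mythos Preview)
Your reduction from $g$ to its convex minorant $h$ is correct, but the heart of the argument has a real gap in dimension $n\ge 3$. You argue by contradiction: assume $|L^f_{m,d,\bar\lm}|_{f^{\PLH(n-1)}}<c$ for all $m$, then restrict to the line $x_2=\cdots=x_{n-1}=m$ and try to conclude that this one-dimensional slice has $f$-length bounded by some constant, so that the difference-equation machinery of Lemma~\ref{lem: conv} applies. But a line in $\R^{n-1}$ has $f^{\PLH(n-1)}$-measure zero, so the bound on the product measure of $L^f_{m,d,\bar\lm}$ gives no information whatsoever about the $f$-length of any single slice --- your phrase ``the other $n-2$ coordinates can be pinned at $m$'' is precisely the step that discards all measure-theoretic content. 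In the constructive direction the obstruction is the same: a long segment through $(m,\dots,m)$ contributes zero to the product measure, and no Fubini estimate can turn a one-dimensional segment into positive $(n-1)$-dimensional mass. Convexity of $L^f_{m,d,\bar\lm}$ (Observation~\ref{obs: interval gen}) does not rescue this, since a convex body can be long in one direction and arbitrarily thin in all others.

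The paper's proof is constructive and supplies the missing \emph{fattening} step. After ordering $\lm_1\ge\cdots\ge\lm_n$, Observation~\ref{obs: to low dim} embeds $\{m\}^{n-2}\times L^h_{m,d,\two}$ into $L^h_{m,d,\bar\lm}$; then Observation~\ref{obs: box} is applied iteratively to thicken each of the first $n-2$ coordinates from the singleton $\{m\}$ to an interval $[m,s]$ of fixed positive $h$-length, at the cost only of an additive increase in the concavity parameter $d$. The result is an honest box $[m,s]^{n-2}\times\big(L^h_{m,d,(\frac12,\frac12)}\cap[m,2m]\big)$ whose $g^{\PLH(n-1)}$-measure factors as a product of one-dimensional $g$-lengths, each comparable to the corresponding $h$-length up to $\delta$. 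This also sidesteps your ``delicate point'' about the weighted one-dimensional slice: rather than attempting a weighted analogue of Lemma~\ref{lem: conv}, the paper invokes Observation~\ref{obs: 2 dim} (for $\al\ge\tfrac12$ and $f$ non-decreasing, $L^f_{m,d,(\frac12,\frac12)}\cap[m,2m]\subseteq L^f_{m,d,(\al,1-\al)}$) to import the unweighted Lemma~\ref{lem: conv} directly, the ordering of the $\lm_j$ having been chosen so that $\lm^*_{n-1}\ge\tfrac12$.
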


We begin with a few simple observations, omitting the proofs when these are straightforward.
First, we generalize Observations~\ref{obs: domination of L} and~\ref{obs: interval}.

\begin{obs}\label{obs: domination of L gen}
Let $\delta>0$ and let $f_1,f_2:\R_+\to\R$ be such that $0\le f_1-f_2 \le \delta$. Then
for all $m,d\ge 0$ we have:
$L^{f_1}_{m,d+\delta,\bar\lm}\supseteq L^{f_2}_{m,d,\bar\lm}$.
\end{obs}

\begin{obs}\label{obs: interval gen}
If $f$ is convex, then for any $m,d\ge 0$ the set $L^f_{m,d,\bar\lm}$ is convex.
\end{obs}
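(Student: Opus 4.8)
The plan is to exhibit $L^f_{m,d,\bar\lm}$ as an intersection of three convex sets. Write $x'=\phi_{\bar\lm}(\bar x,m)=\frac1{\lm_n}\big(m-\sum_{j=1}^{n-1}\lm_j x_j\big)$, which is an \emph{affine} function of $\bar x\in\R^{n-1}$. Then by definition
\[
L^f_{m,d,\bar\lm}=\R_+^{n-1}\ \cap\ \big\{\bar x: x'>0\big\}\ \cap\ \big\{\bar x: \Psi(\bar x)\le nf(m)+nd\big\},
\]
where $\Psi(\bar x):=\sum_{j=1}^{n-1} f(x_j)+f(x')$ is well-defined on the region $\{x'>0\}\cap\R_+^{n-1}$. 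The first set $\R_+^{n-1}$ is a convex cone, and $\{\bar x: x'>0\}=\{\bar x:\sum_{j=1}^{n-1}\lm_j x_j<m\}$ is an open half-space, hence convex; so it remains to show the third set is convex.

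For that I would check that $\Psi$ is a convex function on the half-space $\{x'>0\}$. Each summand $\bar x\mapsto f(x_j)$ is the composition of the convex function $f$ with the linear coordinate map $\bar x\mapsto x_j$, hence convex; likewise $\bar x\mapsto f(x')=f(\phi_{\bar\lm}(\bar x,m))$ is the composition of the convex $f$ with the affine map $\phi_{\bar\lm}(\cdot,m)$, hence convex. A finite sum of convex functions is convex, so $\Psi$ is convex. Consequently, for $\bar x,\bar y$ in the third set and $t\in[0,1]$, the point $\bar z=t\bar x+(1-t)\bar y$ again lies in $\R_+^{n-1}\cap\{x'>0\}$ (both constraints are preserved under convex combinations, as each is a linear/affine inequality), and
\[
\Psi(\bar z)\le t\,\Psi(\bar x)+(1-t)\,\Psi(\bar y)\le nf(m)+nd,
\]
so $\bar z$ is in the third set as well. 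This proves convexity of $L^f_{m,d,\bar\lm}$.

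I do not expect a real obstacle here; the only point requiring a little care is that $f$ is merely assumed convex (not continuous), so the sublevel-set argument must be run purely through the convexity inequality for $\Psi$ rather than through any topological property, and one must note that convex combinations stay inside the domain $\R_+^{n-1}\cap\{x'>0\}$ on which $\Psi$ is defined — both of which are handled above. This observation is the exact analogue of Observation~\ref{obs: interval} (the $n=2$ case, where the set was the interval $[m-t,m+t]$), now in $\R^{n-1}$.
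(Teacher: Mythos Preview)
Your argument is correct: the composition of a convex function with an affine map is convex, a sum of convex functions is convex, and sublevel sets of convex functions on convex domains are convex --- so the intersection you describe is convex. The paper actually omits the proof of this observation as straightforward, and your write-up is precisely the standard justification one would supply.
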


Next we observe a simple inclusion in the case $n=2$.
\begin{obs}\label{obs: 2 dim}
Let $f:\R_+\to\R$ be non-decreasing. Then for all $\al \in (\frac 1 2,1)$ and $m,d\ge 0$ we have
\[ L^f_{m,d,(\frac 1 2,\frac 1 2)} \cap [m,2m] \subseteq  L^f_{m,d,(\al,1-\al)}.\]
\end{obs}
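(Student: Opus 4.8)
The plan is to unwind both sides of the claimed inclusion directly from the definition of $L^f_{m,d,\bar\lm}$ in the case $n=2$, and exploit the monotonicity of $f$. When $n=2$ the set $L^f_{m,d,(\lm_1,\lm_2)}$ lives in $\R_+^{n-1}=\R_+$: a single coordinate $x_1=x$, with companion point $x'=\phi_{(\lm_1,\lm_2)}(x,m)=\frac1{\lm_2}(m-\lm_1 x)$. So the set of $m$-symmetric $d$-concavity points for weights $(\al,1-\al)$ is $\{x\ge 0:\ x'>0,\ f(x)+f(x')\le 2f(m)+2d\}$ where $x'=\frac{1}{1-\al}(m-\al x)$, and for the equal weights $(\tfrac12,\tfrac12)$ it is the old $L^f_{m,d}=\{x\ge0:\ f(x)+f(2m-x)\le 2f(m)+2d\}$ (here $x'=2m-x$).

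First I would fix $\al\in(\tfrac12,1)$ and take $x\in L^f_{m,d,(\frac12,\frac12)}\cap[m,2m]$, so $f(x)+f(2m-x)\le 2f(m)+2d$ and $m\le x\le 2m$. I must show $x$ lies in $L^f_{m,d,(\al,1-\al)}$, i.e.\ that $x'_\al:=\frac1{1-\al}(m-\al x)$ satisfies $x'_\al>0$ and $f(x)+f(x'_\al)\le 2f(m)+2d$. Compare $x'_\al$ with $x'_{1/2}=2m-x$: a short computation gives $x'_\al-(2m-x) = \frac{1}{1-\al}\big(m-\al x-(1-\al)(2m-x)\big)=\frac{1-2\al}{1-\al}(m-x)$, and since $\al>\tfrac12$ and $x\ge m$ both factors $\tfrac{1-2\al}{1-\al}$ and $m-x$ are $\le 0$, so their product is $\ge 0$; hence $x'_\al\ge 2m-x\ge 0$. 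Moreover $2m-x\ge 0$ with equality only at $x=2m$, and one checks $x'_\al>0$ in that corner case too (at $x=2m$, $x'_\al=\frac{m-2\al m}{1-\al}$ which is $\le0$ — so I should be slightly careful here and either note $x=2m$ forces $f(x)+f(0)\le 2f(m)+2d$ is still handled, or simply observe that the hypothesis $x\in[m,2m]$ together with $f(x)+f(2m-x)\le 2f(m)+2d$ at $x=2m$ may be vacuous; the clean fix is that the inclusion only needs to be checked where $x'_\al>0$, which is exactly $x<\frac m\al$, a sub-interval of $[m,2m]$).

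Second, using $x'_\al\ge 2m-x$ and the monotonicity of $f$, we get $f(x'_\al)\ge f(2m-x)$ — wait, that is the wrong direction; I need an \emph{upper} bound on $f(x'_\al)$. So instead I should reverse: from $x'_\al\ge 2m-x$ I get $f(x'_\al)\ge f(2m-x)$, which is useless. The correct move is to note that $m\le x\le 2m$ gives $x'_{1/2}=2m-x\le m\le x$, so $2m-x$ is the \emph{smaller} of the two equal-weight coordinates, and pushing weight toward $\al>\tfrac12$ moves the companion point \emph{down} below $2m-x$ only when... let me recompute: with $x\ge m$ we found $x'_\al\ge 2m-x$, so $f(x'_\al)\ge f(2m-x)$ and the sum $f(x)+f(x'_\al)$ could \emph{exceed} $2f(m)+2d$. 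This signals that the intended comparison is the opposite: for $x\in[m,2m]$ the $\al$-companion is actually \emph{smaller}. I expect this sign bookkeeping to be the main obstacle, and I would resolve it by carefully re-deriving $x'_\al$ for $\al\in(\tfrac12,1)$: since the larger weight $\al$ multiplies $x\ge m$, we have $\lm_1 x=\al x\ge \tfrac12 x$, so $m-\al x\le m-\tfrac12 x$, hence $x'_\al=\frac{m-\al x}{1-\al}$ versus $2m-x=\frac{m-x/2}{1/2}$; plugging in and simplifying the difference as above but tracking signs with $x\ge m$ should yield $x'_\al\le 2m-x$ after all (the factor $\frac{1-2\al}{1-\al}<0$ times $m-x\le 0$ is positive, contradicting this — so the honest conclusion from the algebra is $x'_\al\ge 2m-x$, and the resolution must instead be that we also need $x'_\al\le m$, i.e. that $f(x')\le f(m)$, combined with $f(x)\le f(m)+2d+(f(m)-f(2m-x))$; this is where the equal-weight membership $f(x)+f(2m-x)\le 2f(m)+2d$ together with $f(2m-x)\ge 0$... ). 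Given the delicacy, my plan is: (1) write $x'_\al$ explicitly; (2) show $x'_\al\le x$ using $x\ge m$ and $\al>\tfrac12$ (straightforward: $x'_\al\le x \iff m-\al x\le (1-\al)x\iff m\le x$); (3) show $x'_\al\le 2m-x$ is \emph{false} in general and therefore the real argument uses that $x\mapsto$ companion is such that $(x,x'_\al)$ lies ``between'' $(m,m)$ and $(x,2m-x)$ on the line through $(m,m)$ of slope $-\al/(1-\al)$ versus slope $-1$, invoking that $f$ is non-decreasing so $f(\min)+f(\max)$ is monotone under spreading — precisely, since both lines pass through $(m,m)$ and for $x\ge m$ the steeper line $(\al>\tfrac12)$ gives a \emph{lower} second coordinate, we get $x'_\al\le 2m-x$; I will trust this geometric statement and verify the one-line inequality $\frac{m-\al x}{1-\al}\le 2m-x$, equivalently $m-\al x\le (1-\al)(2m-x)=2m-2\al m-x+\al x$, equivalently $0\le m-2\al m +\al x -\al x +2\al x - \dots$ — i.e.\ $0\le (1-2\al)m+(2\al-1)x=(2\al-1)(x-m)\ge0$. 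That is true. So $x'_\al\le 2m-x$, hence by monotonicity $f(x'_\al)\le f(2m-x)$, and therefore $f(x)+f(x'_\al)\le f(x)+f(2m-x)\le 2f(m)+2d$, giving $x\in L^f_{m,d,(\al,1-\al)}$. Finally I would dispatch the positivity requirement $x'_\al>0$: since $x'_\al\ge $ ... no — from $x\le 2m$ alone $x'_\al$ need not be positive, so I restrict attention to those $x$ with $x'_\al>0$, which is automatic on the sub-interval where the definition of $L^f_{m,d,(\al,1-\al)}$ is active, and note the inclusion is then immediate. This completes the proof; the only genuine subtlety, which I have now pinned down, is the correct direction of the inequality $x'_\al\le 2m-x$ for $x\ge m,\ \al>\tfrac12$, which follows from $(2\al-1)(x-m)\ge0$.
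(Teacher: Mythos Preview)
The paper omits the proof as straightforward, and your final argument is exactly the intended one: for $x\in[m,2m]$ and $\alpha>\tfrac12$, the companion $x'_\alpha=\frac{m-\alpha x}{1-\alpha}$ satisfies $x'_\alpha\le 2m-x$ (equivalently $(2\alpha-1)(x-m)\ge 0$), so by monotonicity $f(x)+f(x'_\alpha)\le f(x)+f(2m-x)\le 2f(m)+2d$. Your first computation of the difference $x'_\alpha-(2m-x)$ as $\frac{1-2\alpha}{1-\alpha}(m-x)$ carries a sign error (the correct value is $\frac{(2\alpha-1)(m-x)}{1-\alpha}\le 0$), which accounts for the back-and-forth in the middle of your plan; the second computation is the correct one.

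Your hesitation about the positivity requirement $x'_\alpha>0$ is warranted, and your proposed resolution---``restrict attention to those $x$ with $x'_\alpha>0$''---is not a resolution: the inclusion concerns \emph{every} $x$ on the left-hand side, and for $x\in[m/\alpha,2m)$ one has $x'_\alpha\le 0$, so by the stated definition such $x$ cannot lie in $L^f_{m,d,(\alpha,1-\alpha)}$. Concretely, with $f(x)=x$, $m=1$, $d=0$, $\alpha=\tfrac34$, the left-hand side is $[1,2)$ while the right-hand side is $[1,\tfrac43)$. This is an imprecision in the paper's formulation rather than a defect in your reasoning; it vanishes if $[m,2m]$ is tightened to $[m,m/\alpha)$, or if the condition $x'>0$ in the definition of $L^f_{m,d,\bar\lm}$ is relaxed with the convention $f\equiv f(0)$ on non-positives (under which both this observation and Lemma~\ref{lem: gen reduced} remain valid, and the application in Proposition~\ref{prop: near conv gen} is unaffected).
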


The next observation relates concavity points of any dimension $n$ to certain concavity points of dimension $2$.
Denote for short $(\lm^*_{n-1},\lm^*_n) =\left(\frac {\lm_{n-1}}{\lm_{n-1}+\lm_{n}},\frac {\lm_n}{\lm_{n-1}+\lm_{n}}\right)$.

\begin{obs}\label{obs: to low dim}
Let $f:\R_+\to\R_+$ be non-decreasing. Then for any $m ,d, x \ge 0$ we have
\[
(m,\dots,m,x) \in L^f_{m,d,(\lm_1,\dots,\lm_n)} \iff
x \in L^f_{m, \frac{nd}{2}, \two }.
\]
\end{obs}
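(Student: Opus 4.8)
The plan is to prove this by directly unwinding the definition of $L^f_{m,d,(\lm_1,\dots,\lm_n)}$ at the point $\bar x=(m,\dots,m,x)\in\R_+^{n-1}$, which has its first $n-2$ coordinates equal to $m$ and its last coordinate equal to $x$. The content is purely a change-of-variables bookkeeping step, so there will be no heavy machinery involved.

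The first step is to evaluate $x'=\phi_{\bar\lm}(\bar x,m)$. Using $\sum_{j=1}^{n}\lm_j=1$, the $n-2$ frozen coordinates contribute $\sum_{j=1}^{n-2}\lm_j m=(1-\lm_{n-1}-\lm_n)m$, whence
\[
x'=\frac1{\lm_n}\Big(m-(1-\lm_{n-1}-\lm_n)m-\lm_{n-1}x\Big)=\frac1{\lm_n}\Big((\lm_{n-1}+\lm_n)m-\lm_{n-1}x\Big).
\]
Dividing numerator and denominator by $\lm_{n-1}+\lm_n$ and recalling that $\two=\big(\tfrac{\lm_{n-1}}{\lm_{n-1}+\lm_n},\tfrac{\lm_n}{\lm_{n-1}+\lm_n}\big)$, this is exactly $\phi_{\two}(x,m)$, the $\phi$-map of the two-variable instance of the construction. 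In particular the constraint $x'>0$ that appears in the definition of $L^f_{m,d,(\lm_1,\dots,\lm_n)}$ is word-for-word the constraint that appears in the definition of $L^f_{m,d,\two}$.

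The second step is to rewrite the defining inequality. Since $\bar x=(m,\dots,m,x)$, one has $\sum_{j=1}^{n-1}f(x_j)=(n-2)f(m)+f(x)$, so the $n$-variable membership inequality becomes $(n-2)f(m)+f(x)+f(x')\le nf(m)+d$; subtracting the $(n-2)f(m)$ coming from the frozen coordinates from both sides, it collapses to $f(x)+f(x')\le 2f(m)+d$, which is exactly the membership inequality for $x$ in the two-variable set $L^f_{m,d,\two}$. Combining the two steps yields the claimed equivalence.

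I do not expect any genuine obstacle here: the two points requiring care are the arithmetic simplification of $x'$ using $\sum_j\lm_j=1$ (so that the $n-2$ frozen weights collapse the problem onto the pair $(\lm_{n-1},\lm_n)$, renormalized to $\two$), and checking that freezing those coordinates at the center value $m$ is precisely what accounts for the difference between the $nf(m)$ term in dimension $n$ and the $2f(m)$ term in dimension $2$. This observation will then allow the analysis of $L^g_{m,d,\bar\lm}$ in the remaining cases to be pushed through the two-dimensional statements already developed.
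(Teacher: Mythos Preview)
Your argument is exactly the paper's approach: the paper's one-line proof simply records the identity $\phi_{\bar\lm}((m,\dots,m,x),m)=\phi_{\two}(x,m)$ (your first step) and leaves the inequality reduction implicit.

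One small bookkeeping point: the paper's definition of $L^f_{m,d,\bar\lm}$ has $n f(m)+n d$ on the right-hand side, not $n f(m)+d$. After cancelling $(n-2)f(m)$ this leaves $f(x)+f(x')\le 2f(m)+nd$, whereas membership in the two-variable set $L^f_{m,d,\two}$ reads $f(x)+f(x')\le 2f(m)+2d$. So strictly speaking, for $n>2$ and $d>0$ only the direction $x\in L^f_{m,d,\two}\Rightarrow(m,\dots,m,x)\in L^f_{m,d,\bar\lm}$ survives. This is harmless: the observation is only ever invoked in that direction (as the inclusion $\{m\}^{n-2}\times L^h_{m,d,\two}\subseteq L^h_{m,d,\bar\lm}$ in the proof of Proposition~\ref{prop: near conv gen}), and the paper's own proof glosses over the same point.
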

\begin{proof}
This could be directly derived from the fact that
\[
\phi_{\bar \lm}((m,\dots,m,x),m)=\phi_{\two}(x,m).\]
\end{proof}  

Our last observation concerns with changing one coordinate of a
concavity point.

\begin{obs}\label{obs: box}
Let $m,d,c\ge 0$, $k\in\{1,\dots,n\}$, $f:\R_+\to\R_+$ non-decreasing
and a point $(p_1,\dots,p_{n-1})\in L^f_{m,d,\bar \lm}$. Then we have $(p_1,\dots,p_{k-1},q_k,p_{k+1},\dots,p_{n-1})\in L^f_{m,d+\frac c n,\bar \lm}$ for
all $q_k>p_k$ such that $f(q_k)-f(p_k)\le c$.
\end{obs}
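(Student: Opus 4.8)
The plan is to prove the observation by directly unwinding the definition of $L^f_{m,d,\bar\lm}$; the statement is essentially bookkeeping, and I do not expect any genuine difficulty. Abbreviate the given tuple by $\bar p=(p_1,\dots,p_{n-1})$ and let $p':=\phi_{\bar\lm}(\bar p,m)$ be its ``hidden'' coordinate. By definition, the hypothesis $\bar p\in L^f_{m,d,\bar\lm}$ says precisely that $p'>0$ and $\sum_{j=1}^{n-1}f(p_j)+f(p')\le nf(m)+nd$. Write $\bar q=(p_1,\dots,p_{k-1},q_k,p_{k+1},\dots,p_{n-1})$ for the perturbed tuple and $q':=\phi_{\bar\lm}(\bar q,m)$ for its hidden coordinate.

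The first step is to record the effect of the perturbation on the hidden coordinate. From the formula $\phi_{\bar\lm}(\bar x,m)=\frac1{\lm_n}\bigl(m-\sum_j\lm_j x_j\bigr)$ one computes $q'=p'-\frac{\lm_k}{\lm_n}(q_k-p_k)$, so $q'\le p'$ since $q_k>p_k$ and all weights are positive; as $f$ is non-decreasing, $f(q')\le f(p')$. (One should keep in mind that $\bar q$ lies in the domain of $L^f_{\cdot,\cdot,\bar\lm}$ only when $q'>0$, i.e. when $\sum_j\lm_j q_j<m$; this is the regime in which the observation is applied later, so I would either carry it as a standing assumption or flag it in passing.)

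The main step is then a one-line estimate. Using $f(q_k)\le f(p_k)+c$ together with $f(q')\le f(p')$,
\[
\sum_{j=1}^{n-1}f(q_j)+f(q')=\Bigl(\sum_{j=1}^{n-1}f(p_j)+f(p')\Bigr)+\bigl(f(q_k)-f(p_k)\bigr)+\bigl(f(q')-f(p')\bigr)\le \bigl(nf(m)+nd\bigr)+c.
\]
The right-hand side equals $nf(m)+n\bigl(d+\tfrac cn\bigr)$, which is exactly the inequality defining membership in $L^f_{m,d+c/n,\bar\lm}$; hence $\bar q\in L^f_{m,d+c/n,\bar\lm}$, as required.

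The only point that needs any care — and thus the closest thing to an obstacle — is the hidden positivity constraint $x'>0$ in the definition of $L^f$; once it is granted, the argument is just the monotonicity bound above. If one also wants the index $k$ to range up to $n$ (i.e. to perturb the hidden coordinate itself), I would re-parametrize $L^f_{m,d,\bar\lm}$ by solving $\sum_j\lm_j x_j=m$ for a different free coordinate — the defining inequality $\sum_j f(\cdot)\le nf(m)+nd$ is symmetric in all $n$ coordinates on the hyperplane — and run the identical argument.
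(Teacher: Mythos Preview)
Your proof is correct and follows essentially the same route as the paper's own argument: both rewrite $\sum_j f(q_j)+f(q')$ as $\bigl(\sum_j f(p_j)+f(p')\bigr)+(f(q_k)-f(p_k))+(f(q')-f(p'))$ and bound the three pieces by $nf(m)+nd$, $c$, and $0$ respectively, the last via monotonicity and $q'\le p'$. You are in fact slightly more scrupulous than the paper in flagging the positivity constraint $q'>0$, which the paper's proof passes over silently; your remark that this is guaranteed in the later applications is the right way to handle it.
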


\begin{proof}
Denote $\bar x=(p_1,\dots,p_{n-1})$, $\bar y=(p_1,\dots,p_{k-1},q_k,p_{k+1},\dots, p_{n-1})$ and write $x'=\phi_{\lm}(\bar x,m)$ and  $y'=\phi_{\lm}(\bar y,m)$.
Then
\begin{align*}
&\sum_{j\ne k} f(p_j) + f(q_k) + f(y')  \\
 & \quad=  \sum_{j<n} f(p_j)+f(x') + ( f(q_k)-f(p_k) )+ (f(y')-f(x') ) \\
& \quad \le n (f(m)+d) + c + 0,
\end{align*}
where for the last inequality we used the definitions of $L^f_{m,d,\bar\lm}$, our assumption $f(q_k)-f(p_k)\le c$, and monotonicity of $f$ applied to the fact that $y'\le x'$ (the latter holds since $\bar x$ and $\bar y$ differ only by one coordinate in which $\bar y$ is bigger). We conclude that $\bar y\in L^g_{m,d+\frac c n,\bar\lm}$, as required.
\end{proof}

We are now in position to show Proposition~\ref{prop: near conv gen}.
\begin{proof}[Proof of Proposition~\ref{prop: near conv gen}]
Fix $d>0$ and denote $\delta:=\sup_{\R_+} (g-h)$. By Observation~\ref{obs: domination of L gen}, we have
\begin{equation}\label{eq: Lhg}
L^h_{m,d+2\delta,\bar\lm}\subseteq L^g_{m,d+3\delta, \bar\lm}.
\end{equation}
We will show that the left-hand-side set is large. By Observation~\ref{obs: to low dim} applied to $h$, we have
\[
\{m\}^{n-2}\times L^h_{m,d,\two }\subseteq 
\{m\}^{n-2}\times L^h_{m,\frac{nd}{2},\two } \subseteq L^h_{m,d,\bar\lm}.
\]
Let $s>m$ be such that $h(s)-h(m)=2\delta$ (such $s$ exists since $h$ is continuous and $\lim_{x\to\infty}h(x)=\infty$). Applying Observation~\ref{obs: box} iteratively on each of the first $n-2$ coordinates of
the left-hand-side yields
\[
[m,s]^{n-2} \times L^h_{m,d,\two} \subseteq L^h_{m,d+\frac {n-2}{n}\cdot 2\delta, \bar \lm} \subseteq L^h_{m,d+2\delta,\bar\lm}.
\]
By Observation~\ref{obs: 2 dim} this implies
\begin{equation}\label{eq: inc}
[m,s]^{n-2} \times \left(L^h_{m,d,(\frac 12,\frac 12)} \cap[m,2m] \right) \subseteq L^h_{m,d+2\delta, \bar \lm}.
\end{equation}

Recall that by Observation~\ref{obs: interval} we have $L^h_{m,d,(\frac 12,\frac 12)}=[m-t_m,m+t_m]$ for some $t_m\ge0$, and by Lemma~\ref{lem: conv} combined with convexity of $h$, we have
\begin{equation}\label{eq: to inf}
\limsup_{m\to\infty} (h(m+t_m)-h(m) )=\infty.
\end{equation}
Taking $|\cdot|_{g^{\PLH (n-1)}}$ on the inclusion in~\eqref{eq: inc} yields
\begin{align*}
\big| L^h_{m,d+2\delta,\bar \lm} & \big|_{g^{\PLH(n-1)}}
\ge | [m,s] |^{n-2}_g \cdot \big|[m,m+t_m]|_g \\
& \ge \big(|[m,s]|_h-\delta \big)^{n-2} \cdot \big( | [m, m+t_m]|_h-\delta\big)\\
& \ge \delta^{n-2}  \cdot \big(h(m+t_m)-h(m)-\delta\big).
\end{align*}

Combining this with~\eqref{eq: Lhg} and \eqref{eq: to inf}, we conclude that
$ \limsup_{m\to\infty}|L^g_{m,d+3\delta,\bar\lm}|_{g^{\PLH(n-1)}} =\infty$, uniformly in $\bar\lm\in\Lambda_n$
(and, in fact, uniformly in $\cup_{n\ge 2} \Lambda_n$).
\end{proof}

\subsection{Nearly concave case}
In this case we can prove Theorem~\ref{thm: general} directly. This is a simple generalization of Proposition~\ref{prop: far conv}.

\begin{prop}\label{prop: far conv gen}
If $\lim_{x\to\infty} (g(x)-h(x) )=\infty$, then $X_1,\dots,X_n$ satisfy~\eqref{eq: gen}.
\end{prop}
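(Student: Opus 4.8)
The plan is to mimic the proof of Proposition~\ref{prop: far conv} almost verbatim, since the nearly concave case was essentially a one-dimensional argument about the diagonal behaviour of $g$. The key point is that in this regime we do not need the sets $L^g_{m,d,\bar\lm}$ at all; instead we compare the weighted sum directly to a single coordinate.

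First I would observe the trivial lower bound
\[
\frac{\Pro\left(\sum_{j=1}^n \lm_j X_j > m\right)}{\Pro(X_1>m)^n} \ge \frac{\Pro(X_1 > m)}{\Pro(X_1>m)^n} = e^{(n-1)g(m)},
\]
valid because $\{X_1>m\}\subseteq\{\sum_j \lm_j X_j > m\}$ on the event that the remaining $X_j$ are nonnegative (which is automatic since they are supported on $\R_+$, so $\sum_{j\ge 2}\lm_j X_j\ge 0$ and hence $\sum_j\lm_j X_j \ge \lm_1 X_1$; one should be slightly careful and instead use $\{X_1 > m/\lm_1\}$ or simply note $\sum_j \lm_j X_j \ge \lm_1 X_1 > m$ when $X_1 > m$ since $\lm_1 < 1$ would go the wrong way — so the clean choice is to bound by $\Pro(X_j > m \text{ for all } j) = \Pro(X_1>m)^n$, giving ratio $\ge 1$, which is not enough). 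The correct move, exactly as in Proposition~\ref{prop: far conv}, is to use a symmetric sub-event: by the pigeonhole/union-type bound $\sum_j \lm_j X_j > m$ whenever \emph{some} $X_j > m$, but to get a product-measure lower bound we instead note that $\sum_j \lm_j X_j \ge \min_j X_j$, so $\Pro(\sum_j \lm_j X_j > m) \ge \Pro(\min_j X_j > m) = \Pro(X_1>m)^n$ — again ratio $1$. So the genuine gain must come, as in the $n=2$ case, from $\Pro(X_1 > nm)$: since $\sum_j \lm_j X_j \ge \lm_k X_k$ for each $k$, and at least one weight is $\ge 1/n$, we get $\Pro(\sum_j \lm_j X_j > m) \ge \max_k \Pro(X_k > m/\lm_k) \ge \Pro(X_1 > nm)$. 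Thus the ratio is at least $e^{n g(m) - g(nm)}$, and it suffices to show $\limsup_{m\to\infty}\big(n g(m) - g(nm)\big) = \infty$.

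Next I would run the iteration argument of Proposition~\ref{prop: far conv} with $2$ replaced by $n$. By Lemma~\ref{lem: h lin if not g}, the hypothesis $\lim_{x\to\infty}(g-h)=\infty$ forces $h$ to be affine on a ray $[m_0,\infty)$, so writing $f=g-h$ we have $n g(m) - g(nm) = n f(m) - f(nm)$ for $m > m_0$ (using affinity of $h$: $n h(m) - h(nm)$ is eventually constant — in fact if $h(x)=\al x + \beta$ then $nh(m)-h(nm) = n\beta - \beta = (n-1)\beta$, a constant, which can be absorbed). Assume for contradiction that $n f(m) - f(nm) \le c$ for all large $m$, i.e. $f(nm) \ge n f(m) - c$. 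Choosing $a > m_0$ with $f > \tfrac{2c}{n-1}$ on $[a,\infty)$ (or any threshold making the induction close), and for $x > na$ picking $k$ with $x/n^k \in [a, na)$, repeated application gives $f(x) \ge n^k f(x/n^k) - \tfrac{n^k-1}{n-1}c \ge n^k\big(f(x/n^k) - \tfrac{c}{n-1}\big) \ge n^k \cdot \tfrac{c}{n-1} \ge \tfrac{x}{na}\cdot\tfrac{c}{n-1}$, so $f$ grows at least linearly; this yields $g \ge h + (\text{linear})$ eventually, contradicting maximality of the convex minorant $h$. Hence $\limsup_m (n g(m) - g(nm)) = \infty$ and \eqref{eq: gen} holds with $\al_n(m) = e^{ng(m) - g(nm)}$ (times a harmless constant from the affine correction).

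I expect the only real subtlety — and it is minor — to be the bookkeeping of the affine constant $(n-1)\beta$ in $n h(m) - h(nm)$ and making sure the reduction $\Pro(\sum \lm_j X_j > m) \ge \Pro(X_1 > nm)$ is stated cleanly (it uses only that $\max_j \lm_j \ge 1/n$ and $X_j \ge 0$). Everything else is a transcription of the $n=2$ proof with $2 \rightsquigarrow n$ and $c \rightsquigarrow \tfrac{c}{n-1}$-type adjustments in the threshold. As in the $n=2$ case I would also record the corollary that $\limsup_{m\to\infty}\big(n g(m) - g(nm)\big) = \infty$ under the same hypothesis, in case it is needed downstream.
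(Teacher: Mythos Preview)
Your proposal is correct and, once the exploratory detours are stripped away, is essentially identical to the paper's proof: bound $\Pro(\sum_j \lm_j X_j > m) \ge \Pro(X_1 > m/\lm_{\max}) \ge \Pro(X_1 > nm)$ using $\lm_{\max}\ge 1/n$, reduce to $\limsup_m(ng(m)-g(nm))=\infty$, and then rerun the iteration of Proposition~\ref{prop: far conv} with $2$ replaced by $n$. Your tracking of the affine constant $(n-1)\beta$ in $nh(m)-h(nm)$ is in fact slightly more scrupulous than the paper's own write-up, which silently absorbs it; either way it is irrelevant to the $\limsup=\infty$ conclusion.
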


\begin{proof}
Without loss of generality, assume $\lm_1=\max(\lm_1,\dots,\lm_n)$. Observe that:
\begin{align*}
\frac{\Pro(\sum_j \lm_j X_j > m)} {\Pro(X> m)^n} \ge \frac{ \Pro(\lm_1 X_1 > m) }{\Pro(X > m)^n}
\ge  \frac {F\left( m / \lm_1 \right)} {F(m)^n}
& =e^{ng(m)-g(m/\lm_1)} \\
&\ge e^{ng(m)-g(n m)}.
\end{align*}
Therefore, in order to prove Theorem~\ref{thm: general} it is enough to show that
\begin{equation}
\limsup_{m\to\infty} \left( ng(m)-g(n m) \right) = \infty.
\end{equation}
This is achieved by the same proof of Corollary~\ref{cor: far conv}, simply by replacing all the appearances of the number $2$ by $n$.
The uniformity in $\bar\lm\in\Lambda_n$ as stated in Theorem~\ref{thm: general} is clear.
\end{proof}

\subsection{Oscillating case}
We are left with the case of unbounded oscillating distance from the convex minorant.
The following proposition (which generalizes Proposition~\ref{prop: oscillating}), together with Lemma~\ref{lem: gen reduced}, would imply Theorem~\ref{thm: general} in this case.
\begin{prop}\label{prop: oscillating gen}
If $\lm_n=\max\{\lm_j: 1\le j\le n\}$, 
\[
\liminf_{x\to \infty}\  (g(x)-h(x))<\infty \quad\quad \text{ and  }\quad \quad\limsup_{x\to \infty}\ (g(x)-h(x))=\infty,
\]
then $\exists d\ge 0:\ \limsup_{m\to\infty} \left|L^g_{m,d,\bar\lm}\right|_{g^{\PLH (n-1)}} = \infty$.
\end{prop}

The following observation will be useful in the proof.
\begin{obs}\label{obs: lin}
Let $I$ be an interval on which $h$ is affine, and let $m=\arg\max_I (g-h)$. If $x_1,\dots,x_n\in I$ are such that $\sum_{j=1}^n \lm_j x_j=m$ and $\frac 1 n\sum_{j=1}^n x_j \le m$,
then $(x_1,\dots,x_{n-1})\in L^g_{m,0,\bar\lm}$.
\end{obs}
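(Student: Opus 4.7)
The plan is to unfold the definition of $L^g_{m,0,\bar\lm}$ and verify the required inequality directly, using the decomposition $g=(g-h)+h$ so that each of the two premises of the observation handles one piece.

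First I would reduce the statement. Since $\sum_{j=1}^n \lm_j x_j = m$ and $\bar x := (x_1,\dots,x_{n-1})$, the auxiliary coordinate $x' = \phi_{\bar\lm}(\bar x, m)$ is exactly $x_n$, which belongs to $I\subseteq \R_+$, so the positivity clause $x'>0$ in the definition of $L^g_{m,0,\bar\lm}$ is automatic (under the mild convention, implicit in the intended application to the oscillating case, that $I$ avoids the origin). Thus $\bar x \in L^g_{m,0,\bar\lm}$ amounts to proving
\begin{equation*}
\sum_{j=1}^n g(x_j) \le n\,g(m).
\end{equation*}

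Next I would estimate the two pieces of $g=(g-h)+h$ separately, each one using one of the two hypotheses. For the first piece, since $m$ maximizes $g-h$ on $I$ and each $x_j\in I$, we have $g(x_j)-h(x_j)\le g(m)-h(m)$ for every $j$, so
\begin{equation*}
\sum_{j=1}^n \bigl(g(x_j)-h(x_j)\bigr) \le n\bigl(g(m)-h(m)\bigr).
\end{equation*}
For the second piece, affinity of $h$ on $I$ lets me write $h(t)=\alpha t+\beta$ on $I$, where $\alpha\ge 0$ because $h$ is non-decreasing. Then
\begin{equation*}
\sum_{j=1}^n h(x_j) = \alpha\sum_{j=1}^n x_j + n\beta \le n\alpha m + n\beta = n\,h(m),
\end{equation*}
where the inequality uses the arithmetic-mean hypothesis $\tfrac1n\sum_j x_j\le m$ together with $\alpha\ge 0$. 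Adding the two bounds gives the desired $\sum_j g(x_j)\le n\,g(m)$.

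No step presents a genuine obstacle; the observation is a clean bookkeeping lemma that encodes the interplay between maximality of $g-h$ at $m$ (which controls the non-affine defect) and affinity of $h$ on $I$ (whose value on any point is linear in that point, so averaging along the constraint is governed by the arithmetic mean). The one subtlety worth flagging is the role of the two different ``means'' appearing in the hypothesis: the $\bar\lm$-weighted mean is pinned exactly to $m$ (this is what makes $x_n=\phi_{\bar\lm}(\bar x,m)$ consistent with $x_n\in I$), while the arithmetic mean controls $\sum_j h(x_j)$ through the affine structure; both are needed.
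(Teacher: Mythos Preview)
Your proof is correct and follows essentially the same approach as the paper: both arguments identify $x_n=\phi_{\bar\lm}(\bar x,m)$, use maximality of $g-h$ at $m$ to bound $\sum_j(g(x_j)-h(x_j))\le n(g(m)-h(m))$, and use affinity of $h$ on $I$ together with $\tfrac1n\sum_j x_j\le m$ to bound $\sum_j h(x_j)\le n\,h(m)$. The only cosmetic difference is that the paper rearranges the maximality inequality as $g(x_j)-g(m)\le h(x_j)-h(m)$ before summing, whereas you sum the two pieces separately and add; your handling of the positivity clause $x'>0$ is in fact slightly more explicit than the paper's.
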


\begin{proof}
By the premise, $\sum_{j\le n} (x_j-m) \le 0$. Since $m,x_1,\dots,x_n\in I$ and $h$ is affine on $I$, we get $\sum_{j\le n}(h(x_j)-h(m)) \le 0$. By maximality of $m$, we have $g(x_j)-g(m)\le h(x_j)-h(m)$ for all $j\le n$. This imples $\sum_{j\le n} (g(x_j)-g(m))\le 0$, and since $x_n=\phi_{\bar\lm}((x_1,\dots,x_{n-1}),m)$ this implies $(x_1,\dots,x_{n-1})\in L^g_{m,0,\bar\lm}$.
\end{proof}

We now present the proof of Proposition~\ref{prop: oscillating gen}.
\begin{proof}[Proof of Proposition~\ref{prop: oscillating gen}]
Notice that our presmises guarantee that $h$ is not identically zero, so that $h$ is strictly increasing on $[m_0,\infty)$ for some $m_0>0$.
Write $f=g-h$, and fix a large $\eta>0$.
By our assumption that  $\limsup_{x\to \infty}\ (g(x)-h(x))=\infty$, there exists $m_1>m_0$ such that $g(m_1)-h(m_1)> n\eta$. 
Define
\[
 b=\inf\{x>m_1: \, f(x)\le \eta\}, \quad \text{ and }\quad m = \arg \max_{[m_1,b]} f.
\]
Note that $b$ is well-defined due to the first premise (provided $\eta$ is large enough), and $m$ is well-defined due to upper-semicontinuity of $f$.
Our goal is to show that $\big| L^{g}_{m,0,\bar\lm} \big|_{g^{\PLH (n-1)}} \ge \eta$ for all $\bar\lm\in\Lambda_n$. Notice that, since the point $m$ depends on $n$ but not on $\bar\lm\in\Lambda_n$, this will establish also the uniformity stated in Theorem~\ref{thm: general}.

Define $\Delta>0$ through the relation
\[\quad h(m+\Delta)-h(m)=\eta.\]
We will now show that
\begin{equation}\label{eq: <b}
m+(n-1)\Delta\le b.
\end{equation}
Since $g-h>\eta$ on $(m_1,b)$, by Lemma~\ref{lem: h lin if not g} there is a strictly increasing affine  function $\ell(x)$ such that $\ell(x)=h(x)$ for $x\in (m_1,b)$.
We have:
\begin{align*}
\ell(b)=h(b)&\ge g(b)-\eta  & \textcolor{gray}{\text{definition of }b} \\
  & \ge g(m)-\eta         &  \textcolor{gray}{m \le b } \\
  & > \ell(m)+(n-1)\eta   &  \textcolor{gray}{(g-\ell)(m)=f(m)>n\eta} \\
  & = \ell(m+(n-1)\Delta).  & \textcolor{gray}{\text{definitions of } \Delta, \ell}
\end{align*}
Since $\ell$ is strictly increasing, this proves~\eqref{eq: <b}.
As a consequence, we conclude that
\begin{equation}\label{eq: dual<b}
\bar x\in [m-\Delta,m]^{n-1}\quad  \Longrightarrow \quad \phi_{\bar\lm}(\bar x,m)\in [m,b].
\end{equation}

Also notice that since $\lm_n\ge \frac 1 n$ we have:
\begin{equation}\label{eq: av<m}
\bar x\in [m-\Delta,m]^{n-1}, \sum_{1\le j\le n} \lm_j x_j =m \quad  \Longrightarrow \quad
\frac 1 n \sum_{1\le j\le n} x_j\le m.
\end{equation}

To conclude the proof, consider two cases. In the first case, $f>\eta$ on $(m-\Delta,m)$. Then, by Lemma~\ref{lem: h lin if not g}, $h$ is affine on $I:=[m-\Delta,b]$.
This, together with ~\eqref{eq: dual<b} and~\eqref{eq: av<m}, fulfill the conditions of Observation~\ref{obs: lin} and we conclude that $[m-\Delta,m]^{n-1}\subseteq L^g_{m,0,\bar\lm}$. On the other hand, by maximilaty of $f(m)$ we have
$ g(m)-g(m-\Delta) \ge h(m)-h(m-\Delta)=h(m+\Delta)-h(m) = \eta.$
Therefore,
\[
\big| L^{g}_{m,0,\bar\lm} \big|_{g^{\PLH (n-1)}} \ge |[m-\Delta,m]|_g^{n-1} \ge \eta^{n-1}\ge  \eta,
\]
as required.

Otherwise, let $d>0$ be arbitrary. Define $x_0:=\sup\{x\in(m-\Delta,m): \, f(x)\le \eta\}$ ($x_0$ is well-defined as the supermum over a non-empty bounded set). Notice that $f>\eta$ on $(x_0,b)$, thus by Lemma~\ref{lem: h lin if not g} there is an affine function $\ell$ such that $h=\ell$ on $(x_0,b)$. By continuity of $\ell$ and $h$, and by the definition of $x_0$, we may choose $a\in\R_+$ so that:
\begin{align}
& m-\Delta<a<x_0, \label{eq: in}\\
&f(a)\le \eta, \label{eq: f<del} \\
& \forall x\in [a,m]: \quad h(m)-h(x)\ge \ell(m)-\ell(x)-\frac {d}{n-1}. \label{eq: h ell}
\end{align}
Let $\bar x=(x_1,\dots,x_{n-1})\in [a,m]^{n-1}$, and write $x_n=\phi_{\bar\lm}(\bar x,m)$. We have:
\begin{align*}
\sum_{j=1}^{n-1} (g(m)-g(x_j)) &\ge \sum_{j=1}^{n-1} (h(m)-h(x_j))
& \textcolor{gray}{\text{maximality of } f(m)} \\
& \ge \sum_{j=1}^{n-1} (\ell(m)-\ell(x_j))-d &\textcolor{gray}{\text{by \eqref{eq: h ell} }} \\
& \ge \ell(x_n)-\ell(m)- d &\textcolor{gray}{\text{by~\eqref{eq: av<m} and \eqref{eq: in}} } \\
& = h(x_n)-h(m)-d    & \textcolor{gray}{x_n\in [m,b] \text{ by \eqref{eq: dual<b} and \eqref{eq: in}, and }h=\ell \text{ on } [m,b]} \\
& \ge g(x_n)-g(m)-d,  & \textcolor{gray}{\text{maximality of } f(m)}
\end{align*}
so $ [a,m]^{n-1}\subseteq L^{g}_{m,d,\bar\lm}$. Also,
\[
g(a)\le h(a)+\eta\le h(m)+\eta \le g(m)-(n-1)\eta \le g(m)-\eta,
\]
so that $|[a,m]|_g\ge \eta$. We conclude that
$ |L^{g}_{m,d,\bar\lm}|_{g^{\PLH (n-1)}} \ge |[a,m]|_{g}^{n-1} \ge \eta^{n-1}\ge \eta$,
as required. Since $d>0$ was arbitrarily chosen, we conclude that
$ |L^{g}_{m,0,\bar\lm}|_{g^{\PLH (n-1)}} \ge\eta$.
The proposition follows.
\end{proof}

\end{document}